\documentclass[microtype]{gtpart}
\usepackage[parfill]{parskip}
\usepackage{graphicx}
\usepackage{amsmath}
\usepackage{amstext}
\usepackage{amsfonts}
\usepackage{amsthm}
			\usepackage{amssymb}
\usepackage{subfig}
\usepackage{hyperref}
\usepackage{rlepsf}

\setlength{\evensidemargin}{0.25in}
\setlength{\oddsidemargin}{0.25in}
\setlength{\textwidth}{6in}

\title{Geometric Composition in Quilted Floer Theory}
\author{Yank\i\ Lekili} 
\address{University of Cambridge, Cambridge UK}
\email{yl319@cam.ac.uk}
\author{Max Lipyanskiy}
\address{Columbia University, New York, NY USA}
\email{mlipyan@math.columbia.edu}

\newtheorem{theorem}{Theorem}
\newtheorem{lemma}[theorem]{Lemma}
\newtheorem{proposition}[theorem]{Proposition}
\newtheorem{definition}[theorem]{Definition}
\newtheorem{corollary}[theorem]{Corollary}

\newtheorem{remark}[theorem]{Remark}

\newcommand{\f}[1]{\mathbb{#1}}

\newcommand{\del}{\partial}
\newcommand{\delbar}{\bar{\partial}}

\newcommand{\Ar}{\mathbb{R}}
\newcommand{\ep}{\epsilon}
\newcommand{\I}{\text{index}}

\newcommand{\QED}{\vspace{-.31in}\begin{flushright}\qed\end{flushright}}

\begin{document}

\begin{abstract} 

We prove that Floer cohomology of cyclic Lagrangian correspondences is invariant under
transverse and embedded composition under a general set of assumptions.

\end{abstract}
\maketitle

\section{Introduction}

\subsection{Lagrangian correspondences and geometric composition}

Given two symplectic manifolds $(M_1, \omega_1)$, $(M_2, \omega_2)$ a Lagrangian
correspondence is a Lagrangian submanifold $L \subset (M_1 \times M_2, -\omega_1
\oplus \omega_2)$.  These are the central objects of the theory of holomorphic quilts
as developed by Wehrheim and Woodward in \cite{WW}. Consider two Lagrangian
correspondences $L_i \subset (M_{i-1} \times M_i, - \omega_{i-1} \oplus \omega_i )$
for $i=1,2$.  Let  \[ \Delta = \{ (x,y,z,t) \in M_0\times M_1\times M_1\times M_2 \ | \ 
y=z \} \]  If $L_1 \times L_2$ is transverse to $\Delta$, we may form the fibre product
$L_1\times_{M_1} L_2\subset M_0\times M_1\times M_1 \times M_2$ by intersecting $\Delta$ with
$L_1\times L_2$.  If the projection $L_1\times_{M_1} L_2 \rightarrow M_0\times M_2$ is an
embedding, we say that $L_1$ and $L_2$ are composable and $L_1\circ L_2$ is naturally a Lagrangian submanifold of $M_0\times M_2$ and
is called the (geometric) composition of $L_1$ and $L_2$.   As a point set one has \[
L_1 \circ L_2 = \{\ (x,z) \in M_0\times M_2 \ |\ \ \exists y \in M_1\ \ \text{such
that}\ \ (x,y) \in L_1\ \ \text{and}\  \ (y,z) \in L_2 \} \]

\subsection{Floer cohomology of a cyclic set of Lagrangian correspondences} 

A cyclic set of Lagrangian correspondences of length $k$ is a set of Lagrangian
correspondences $L_i \subset M_{i-1} \times M_i$ for $i=1,\ldots,k$ such that
$(M_0, \omega_0)  = (M_k, \omega_k)$.

Given a cyclic set of Lagrangian correspondences, Wehrheim and Woodward in \cite{WW}
define a Floer cohomology group $HF(L_1,\ldots,L_{k})$ (see Section \ref{definition}
below for a review). This can be identified with the Floer cohomology group of the
Lagrangians $$L_{(0)} = L_1 \times L_3 \times \ldots \times L_{k-1} \text{\ and \ } L_{(1)}=
L_2\times L_4 \times \ldots \times L_k $$ in the product manifold $\underbar{M} =
M^-_0 \times M_1\times M^-_2 \times \ldots \times M_{k-1}$ if $k$ is even. If $k$ is odd, one
inserts the diagonal $\Delta_{M_0} \subset M_0^- \times M_0 = M^-_{k+1} \times M_0$ to
get a cyclic set of Lagrangian correspondences with even length. (We denote by $M^-$ the symplectic manifold $(M,-\omega)$ where $\omega$
is the given symplectic form on $M$).  Under appropriate assumptions on the underlying Lagrangians, one expects an isomorphism
\[ HF(L_0,\ldots,L_r,L_{r+1},\ldots,L_{k-1}) \simeq HF(L_0,\ldots, L_r \circ L_{r+1},
\ldots, L_{k-1}) \]

when $L_r$ and $L_{r+1}$ are composable. The main goal of the present work is to prove such an isomorphism under a rather general set of assumptions. Such an isomorphism should exist whenever the Floer cohomology groups on either side can be defined.
For instance, let us discuss this isomorphism in the aspherical case. For this we need to introduce some notation. Namely, given two
transverse Lagrangians $L, L' \subset (M,\omega)$, we consider the path space: 
$$\mathcal{P}= \mathcal{P}(L,L') = \{\gamma :[0,1] \to M | \ \gamma(0) \in L \ , \
\gamma(1) \in L' \}$$
Now pick $x_0 \in L\cap L'$ to be the constant path on a fixed component $\mathcal{P}_{0}$ of $\mathcal{P}$. Then given any path $\gamma \in
\mathcal{P}_{0}$, we can pick a smooth homotopy $\gamma_t$ such that
$\gamma_0 =x_0$ and $\gamma_1=\gamma$. Then consider the action functional : 
\begin{eqnarray*} 
	\mathcal{A} : \mathcal{P}_{0} \to \f{R}  \\ 
\gamma \to \int_{[0,1]^2} \gamma_t^* \omega 
\end{eqnarray*}

This is not always well-defined, because in general it depends on the choice of the
homotopy $\gamma_t$. However, under various topological assumptions, it is possible
to avoid this dependence. 

A simple case of the main result in this paper is the following statement:

\begin{theorem} 
\label{isomorphism} 
Given a cyclic set of compact connected orientable Lagrangian correspondences
$L_1,\ldots,L_k$ in compact symplectic manifolds $(M_0,\omega_0),\ldots,
(M_k,\omega_k)$ such that for some $r$, $L_r$ and $L_{r+1}$ can be composed, suppose that the following
topological properties hold: 
 \begin{equation}\begin{array}{c}
	 \displaystyle \text{For any\ \ } v : S^1 \times [0,1] \to \underbar{M}\text{\ \ such that\ \ } v|_{S^1 \times \{0\}} \subset L_{(0)} \text{\ \ and \ \ } v_{S^1 \times \{1\}} \subset L_{(1)} \\ 
\displaystyle \int v^* \omega_{\underbar{M}} = 0\\

\end{array}
\end{equation}
Then, 
\[ HF(L_0,\ldots,L_r,L_{r+1},\ldots,L_{k-1}) \simeq HF(L_0,\ldots, L_r \circ L_{r+1},
\ldots, L_{k-1}) \]

\end{theorem}

The assumption (1) is used to avoid bubbling in various moduli spaces and to ensure that the action functional
\begin{equation*}
	\begin{array}{l}
	 \displaystyle \mathcal{A} : \mathcal{P}_0 (L_{(0)},L_{(1)}) \to \f{R}  
\end{array}
\end{equation*} 

is single valued (on any of its path components) which ensures that the Floer differential squares
to zero. These assumptions are already required for the Floer cohomology groups
considered above to be well-defined. One could replace them with assumptions of
similar nature but not dispose of them altogether.

The analogous result under positive monotonicity assumptions was proved earlier by Wehrheim and Woodward in \cite{WW}. The difficulty in extending their proof to our setting is the
fact that the strip shrinking argument in \cite{WW} might give rise to certain {\it
figure-eight bubbles} for which no removal of singularities is known.  Our proof of
the theorem above does not involve strip shrinking and does not give rise to
figure-eight bubbles. Applying the idea used for the proof of Theorem
\ref{isomorphism}, we will give an alternative proof of the positive monotone case considered in \cite{WW}. 

\begin{theorem}\label{positive}(positively monotone case) 
Let $L_1,\ldots,L_k$ be a cyclic set of compact orientable Lagrangian correspondences
in compact connected symplectic manifolds $(M_0,\omega_0),\ldots,
(M_k,\omega_k)$ such that for some $r$, $L_r$ and $L_{r+1}$ can be composed. Let
$\tau > 0 $ be a fixed real number. Suppose that the following
topological properties hold: 
\begin{equation}\begin{array}{c}
\displaystyle \text{For any\ \ } v : S^1 \times [0,1] \to \underbar{M}\text{\ \ such that\ \ } v|_{S^1 \times \{0\}} \subset L_{(0)} \text{\ \ and \ \ } v_{S^1 \times \{1\}} \subset L_{(1)} \\ 
\displaystyle \int v^* \omega_{\underbar{M}} = \tau I_{\text{Maslov}}(v^* L_{(0)} ,
v^*L_{(1)}) \\
\displaystyle \text{The minimal Maslov index for disks in\ } \pi_2(\underbar{M},
L_{(0)}) \text{\ and\ } \pi_2(\underbar{M},L_{(1)}) \text{\ is\ } \geq 3. \\
\end{array}
\end{equation}

Then, 
\[ HF(L_0,\ldots,L_r,L_{r+1},\ldots,L_{k-1}) \simeq HF(L_0,\ldots, L_r \circ L_{r+1},
\ldots, L_{k-1}) \]
\end{theorem}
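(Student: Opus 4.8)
The plan is to run the argument used for Theorem~\ref{isomorphism}, simply replacing the asphericity-based \emph{a priori} bounds by the monotonicity hypotheses (3).

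First I would reduce both sides to ordinary (unquilted) Lagrangian Floer cohomology. By the folding construction of \cite{WW}, $HF(L_0,\ldots,L_r,L_{r+1},\ldots,L_{k-1})$ is the Floer cohomology of $(L_{(0)},L_{(1)})$ in $\underline{M}$, and $HF(L_0,\ldots,L_r\circ L_{r+1},\ldots,L_{k-1})$ is the Floer cohomology of the analogous pair in the product obtained from $\underline{M}$ by deleting the $M_r$-factor (inserting a diagonal when $k$ is odd). One checks that the hypotheses (3) are inherited, with the same $\tau$, by all the auxiliary Lagrangians that occur --- $L_{(0)}$, $L_{(1)}$, their images after the deletion, and $L_r\times L_{r+1}$, $(L_r\circ L_{r+1})\times\Delta_{M_r}$ inside $M_{r-1}^-\times M_r\times M_r^-\times M_{r+1}$ --- so that every Floer complex below is defined, $\f{Z}/N$-graded, and satisfies $\partial^2=0$. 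Transversality of the composition then matches a generalized intersection point through $M_{r-1},M_r,M_{r+1}$ with a point of $L_r\circ L_{r+1}$ together with its lift to $M_r$, and embeddedness of $L_r\circ L_{r+1}$ upgrades this to a bijection of generators.

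Next I would build the comparison maps without strip shrinking. Rather than letting the width of the $M_r$-patch tend to zero --- the move responsible for the figure-eight bubbles in \cite{WW} --- I would count quilted holomorphic maps on a fixed interpolating domain in which the $M_r$-patch keeps positive width but is attached along a half-infinite slit: near one puncture the seam data is the pair $(L_r,L_{r+1})$, while along the slit and at the other puncture the seam is the diagonal $\Delta_{M_r}$, so that the asymptotics of such a map are controlled by the path space $\mathcal{P}(L_r\times L_{r+1},(L_r\circ L_{r+1})\times\Delta)$. Single-valuedness of the action functional on this path space --- part of the content of (3) --- gives the energy identity needed for Gromov compactness and makes the count a degree-$0$ chain map $\Phi$; the reflected domain gives a map $\Psi$ in the other direction. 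Gluing a slit-domain to its mirror produces a domain that deforms, rel ends, to the trivial strip, so the usual TQFT-style argument yields chain homotopies $\Phi\Psi\simeq\mathrm{id}$ and $\Psi\Phi\simeq\mathrm{id}$; here one also uses that over the local model $M_{r-1}^-\times M_r\times M_r^-\times M_{r+1}$ the slit-quilt with a constant generator as input has the constant quilt as its only solution, which is again transversality of the composition.

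The hard part will be the compactness-and-transversality package for these moduli spaces, which is exactly where (3) is indispensable. One must rule out every codimension-$\le 1$ degeneration other than ordinary strip-breaking: sphere bubbles in a factor $M_i$ are killed because $\tau\ge 0$ together with the monotone index count leaves no room for them in moduli spaces of dimension $\le 1$; disk bubbles on $L_{(0)}$, $L_{(1)}$ or on the slit seams are killed by the minimal Maslov index $\ge 3$ hypothesis, exactly as in the definition of quilted Floer cohomology; and no figure-eight bubble can form, since the $M_r$-patch never collapses to zero width along the interpolating domains. The delicate case is a patch breaking off at the tip of the slit along the diagonal seam $\Delta_{M_r}$: this is an ordinary quilted bubble rather than a figure-eight, so removal of singularities applies and monotonicity bounds its index from below, which suffices to discard it. Granting this package, $\Phi$ and $\Psi$ are mutually inverse on cohomology and the isomorphism follows.
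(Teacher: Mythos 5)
Your setup — the same $Y$-shaped quilt as in Theorem~\ref{isomorphism}, fixed (not shrinking) $M_r$-width, excluding sphere bubbles by monotonicity and disk bubbles by minimal Maslov index $\geq 3$, controlling the Y-end asymptotics by Morse-Bott analysis — matches the paper's strategy. But there are two places where the argument, as you wrote it, has a real gap.

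First, you assert that "single-valuedness of the action functional on $\mathcal{P}(L_r\times L_{r+1},(L_r\circ L_{r+1})\times\Delta)$" is "part of the content of (3)." It is not. Monotonicity ($\tau\geq 0$) is strictly weaker than the asphericity hypotheses (2) of Theorem~\ref{isomorphism}; with $\tau>0$ the action functional is typically multi-valued. What (3) actually gives is an \emph{area-index relation} for Floer strips on $(L_{(0)},L_{(1)})$. To get the needed a priori energy bound for $Y$-maps of a fixed index, the paper has to do extra work: Lemma~\ref{homotopy} shows that every element of $\mathcal{M}_J(\underbar{x},\underbar{y})$ is homotopic to a concatenation $u\#c$ of a Floer strip $u$ and the constant $Y$-map $c$, and the Maslov-index computation of Section~\ref{Mindex} (via excision) establishes $\I(c)=0$, so $\I(\Phi)=\I(u)$. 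Only then does the strip monotonicity transfer to the quilt. This homotopy-plus-index-zero step is the technical heart of the monotone case, and your proposal skips it entirely.

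Second, the step "Gluing a slit-domain to its mirror produces a domain that deforms, rel ends, to the trivial strip, so the usual TQFT-style argument yields chain homotopies $\Phi\Psi\simeq\mathrm{id}$ and $\Psi\Phi\simeq\mathrm{id}$" reintroduces precisely the difficulty the whole approach was designed to avoid. The glued domain for $\Psi\circ\Phi$ (or $\Phi\circ\Psi$) contains a bounded $M_r$-patch in its interior; deforming that domain to the trivial strip requires shrinking (or growing) this patch to zero width, which is exactly the strip-shrinking degeneration that produces figure-eight bubbles — the move for which no removal of singularities is known. The paper does not use a chain homotopy here at all. Instead it argues algebraically: the area-index relation forces any non-constant index-$0$ broken pair $(u_1,u_2)$ contributing to the diagonal of $\Psi\circ\Phi$ to be constant, so $\Psi\circ\Phi = I + K$ with $K$ supported strictly off the diagonal in the action filtration; and then a finite-chain counting argument (any cycle of non-constant pairs longer than the number of generators repeats a critical point, yielding positive index and hence zero contribution) shows $K^{N_0}=0$. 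The conclusion that $\Phi$ is an isomorphism follows from nilpotency, not from a homotopy of domains.

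In short: your outline has the right quilt and the right bubble exclusions, but it needs (i) the homotopy/index-zero argument to derive the area-index relation for the $Y$-quilt, and (ii) the nilpotency argument in place of the TQFT chain-homotopy, since the latter secretly requires strip shrinking.
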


Note that we only require monotonicity for the annuli with boundary on $L_{(0)}$ and
$L_{(1)}$ which makes the group on the left well-defined. However, it is easy to see
that the corresponding monotonicity relation for the group on the right hand side
follows from this. Furthermore, via the natural map from $\pi_2(\underbar{M}) \to \pi_2(\underbar{M}; L_{(0)}, L_{(1)})$ the hypotheses of the theorem implies the following monotonicity of the
symplectic manifolds $M_i$, which determines the monotonicity constant $\tau$.
\[ [\omega_{M_i}] =
\tau c_1(TM_i) \text{\ for all\ } i .\]

Note also that when $\tau=0$, the symplectic manifolds are exact and
necessarily non-compact, thus one needs to assume convexity properties
at infinity (as for example in \cite{Seidel}) in order to ensure compactness of
various moduli spaces. The proof is simpler in the exact case. Indeed, the
hypothesis of Theorem \ref{isomorphism} are satisfied, hence this case is
covered by the previous result.

Finally, we extend the argument to the (strongly) negatively monotone case which is
needed for the application to quilted Floer homology of broken fibrations (\cite{lekili}) . Recall that for $[u] \in \pi_2(M)$, the expected dimension of the moduli space of unparametrized holomorphic spheres in class $[u]$ is given by $2 ( \langle c_1(TM), [u] \rangle + \text{dim}(M) -3)$ and for $[u] \in \pi_2(M, L)$, the expected dimension of the moduli space of unparametrized holomorphic disks in class $[u]$ is given by $\mu_L([u]) + \text{dim}(M)-3$ (where $\mu_L$ is the Maslov homomorphism). In the strongly negative case, we require these numbers to be sufficiently negative in order to avoid bubbling in $0,1$ and $2$-dimensional moduli spaces.

\begin{theorem}\label{negative}(strongly negative monotone case) 
Given a cyclic set of compact connected orientable Lagrangian correspondences
$L_1,\ldots,L_k$ in compact connected symplectic manifolds $(M_0,\omega_0),\ldots,
(M_k,\omega_k)$ such that for some $r$, $L_r$ and
$L_{r+1}$ can be composed. Let $\tau < 0$ be a fixed real number. Denote $\text{dim\ }M_i = 2m_i$. Suppose that the following
topological properties hold for all $i=0,\ldots,k$: \begin{equation}\begin{array}{c}
 \displaystyle \text{For any\ \ } v : S^1 \times [0,1] \to \underbar{M}\text{\ \ such that\ \ } v|_{S^1 \times \{0\}} \subset L_{(0)} \text{\ \ and \ \ } v_{S^1 \times \{1\}} \subset L_{(1)} \\ 
\displaystyle \int v^* \omega_{\underbar{M}} = \tau I_{\text{Maslov}}(v^* L_{(0)} ,
v^*L_{(1)}) \\
\end{array}
\end{equation}
\begin{equation}\begin{array}{l}
\displaystyle \text{If\ } \int u^*(\omega_i) > 0 \text{\ for\ } [u] \in \pi_2(M_i),
\text{\ then \ } \langle c_1(TM_i), [u] \rangle < -m_i+2.  \\ 
\displaystyle \text{If\ } \int u^*(-\omega_i \oplus \omega_{i+1}) > 0 \text{\ for\ }
[u] \in \pi_2(M_i \times M_{i+1}, L_{i+1}), \\ \ \ \ \ \ \ \ \ \ \ \text{\ then \ }
\mu_{L_{i+1}}([u]) <
-(m_i+m_{i+1}) +1 . \\ 
\end{array}
\end{equation}

Then, 
\[ HF(L_0,\ldots,L_r,L_{r+1},\ldots,L_{k-1}) \simeq HF(L_0,\ldots, L_r \circ L_{r+1},
\ldots, L_{k-1}) \]

\end{theorem}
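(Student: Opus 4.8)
The plan is to follow the same strategy used for Theorems \ref{isomorphism} and \ref{positive}, namely to avoid strip-shrinking entirely and instead build a direct chain-level comparison between the two quilted Floer complexes via a moduli space of ``quilted strips with a seam condition along $L_r \circ L_{r+1}$'' interpolating between the configuration with the two correspondences $L_r, L_{r+1}$ present and the configuration with the composed correspondence. Concretely, I would set up the Floer complex $CF(L_0,\ldots,L_r,L_{r+1},\ldots,L_{k-1})$ as the Lagrangian Floer complex of $L_{(0)}, L_{(1)}$ in $\underbar{M}$, the complex $CF(L_0,\ldots,L_r\circ L_{r+1},\ldots,L_{k-1})$ as the corresponding complex in the smaller product, and construct a continuation-type map using the ``$Y$-shaped'' or ``quilted triangle'' configuration whose extra patch carries the correspondence $L_r \times L_{r+1}$ with one seam labeled $(L_r\circ L_{r+1})\times\Delta$; this is exactly why the action functional on $\mathcal{P}(L_r\times L_{r+1},(L_r\circ L_{r+1})\times\Delta)$ was required to be single-valued in Theorems \ref{isomorphism} and \ref{positive}, and I would reuse the corresponding hypothesis here (the first line of assumptions (6), the annulus monotonicity, together with the disk hypotheses). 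The two maps, in each direction, are then shown to be chain homotopy inverse to each other by the usual gluing/cobordism argument on one-dimensional moduli spaces.

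The genuinely new point relative to the monotone case is controlling bubbling when $\tau<0$. In the negatively monotone setting one cannot rule out disk and sphere bubbles by an energy/index inequality in the naive way; instead the strategy (this is the ``strongly negative'' hypothesis, encoded in assumptions (7)) is to arrange that any nonconstant sphere or disk bubble has Chern number or Maslov index so negative that the bubbled configuration lives in a moduli space of \emph{negative} expected dimension, hence generically empty. So the core of the proof is a dimension count: for a simple holomorphic sphere $u$ in $M_i$ with $\langle[u],c_1\rangle<-m_i+2$, the space of such spheres through a generic point has dimension $2\langle[u],c_1\rangle + 2m_i - 4 < 0$, and similarly for disks with boundary on $L_{i+1}$ the constraint $\mu_{L_{i+1}}([u]) < -(m_i+m_{i+1})+1$ forces the relevant parametrized disk moduli space (with one boundary marked point constrained to a codimension-zero stratum, appearing in the boundary of a $1$-dimensional main moduli space) to have negative dimension. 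I would carry this out patch by patch: each of the symplectic manifolds $M_0,\ldots,M_{k-1}$ contributes possible sphere bubbles (controlled by the first line of (7)), and each pair contributes possible disk bubbles on the correspondences $L_{i+1}$ (controlled by the second line of (7)); one also checks the composed correspondence $L_r\circ L_{r+1}$ and the auxiliary seam $(L_r\circ L_{r+1})\times\Delta$ inherit the needed negativity from the data of $L_r, L_{r+1}$, exactly as the monotonicity relation on the right-hand side was observed to follow from that on the left in Theorem \ref{positive}.

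With bubbling excluded, the remaining steps are essentially formal and parallel the earlier proofs: (i) transversality for the relevant quilted moduli spaces, achieved by choosing domain-dependent almost complex structures generically on each patch, noting that simple-curve/somewhere-injectivity arguments apply because we have removed multiply-covered bubbles from consideration; (ii) compactness of the zero- and one-dimensional moduli spaces, where Gromov compactness now produces no bubbles and the only boundary phenomena are strip-breaking, thanks to the single-valued action functionals in assumption (6) which give the a priori energy bound and make the differential square to zero; (iii) the gluing analysis showing that the composition of the two comparison maps differs from the identity by a term counting broken configurations, i.e. is chain-homotopic to the identity. I would organize all three theorems so that Theorem \ref{negative} is obtained from the same moduli-space package as Theorems \ref{isomorphism} and \ref{positive}, with only the bubbling-exclusion lemma replaced by the dimension-count argument above.

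The step I expect to be the main obstacle is precisely the bubbling analysis in the negatively monotone case: one has to be careful that the ``strongly negative'' inequalities in (7) are exactly strong enough to kill \emph{every} stratum of the Gromov--Floer boundary that is not honest strip-breaking --- including configurations with a bubble attached to an interior point, a boundary point on an $L_{i+1}$ seam, or a point on the new seam $(L_r\circ L_{r+1})\times\Delta$ --- and that after passing to the underlying simple curve the index drop is still in the correct range. Getting the constants $-m_i+2$ and $-(m_i+m_{i+1})+1$ to line up with the actual virtual dimensions of the bubbled moduli spaces (taking into account the evaluation constraints and the dimension of the ambient stratum) is the delicate bookkeeping on which the whole argument rests.
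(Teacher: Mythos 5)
Your overall plan matches the paper's: construct $\Phi$ by counting holomorphic $Y$-quilts, establish the area--index relation via the homotopy-class decomposition $\Phi = u\# c$ and the vanishing of the index of the constant $Y$-map, exclude sphere and disk bubbles by passing to underlying simple curves (McDuff for spheres, Kwon--Oh/Lazzarini for disks) and using the strongly negative hypotheses to make the simple moduli spaces have negative expected dimension, and finally construct a reverse map $\Psi$ to show $\Phi$ is an isomorphism. Two points need attention.

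First, and this is the genuine gap: you treat the possible bubble at the $Y$-end as a disk bubble on the auxiliary seam $(L_r\circ L_{r+1})\times\Delta$ and say that this seam ``inherits the needed negativity.'' But what actually degenerates at the $Y$-end is not a disk: it is a holomorphic strip $\delta:\mathbb{R}\times[0,1]\to\underbar{M}$ with Morse--Bott boundary conditions on the pair $(L_{01}\times L_{12},\,L_{02}\times\Delta)$, converging at both ends to points of the clean intersection $\tilde L_{02}$. Its Fredholm index is not the Maslov index of a disk on one of the hypothesized Lagrangians, and the strongly negative hypotheses say nothing directly about it. The paper bridges this with a separate lemma: one attaches a degenerate fourth patch $\delta_4$ (constant in the $[0,1]$ direction, area zero), folds the resulting quilt into a disk $\delta'''$ with boundary entirely on $L_{01}\times L_{12}$, and then an excision computation gives $\I(\delta)+2m_1 = \I(\delta'')=\I(\delta''')$; only now does the disk hypothesis apply to force $\I(\delta)\leq 0$, and transversality then kills $\delta$ since a nonconstant strip carries an $\mathbb{R}$-translation. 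Without this reduction your proposal has no control on the $Y$-end bubble, which is precisely the new degeneration this family of arguments must address.

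Second, a route difference worth flagging: you propose showing $\Phi$ and $\Psi$ are chain homotopy inverses ``by the usual gluing/cobordism argument on one-dimensional moduli spaces,'' i.e.\ a parametrized degeneration of the glued $Y$-quilts to the trivial strip. The paper deliberately avoids any such degeneration --- the whole point of the construction is to circumvent strip-shrinking --- and instead shows the stronger \emph{chain-level identity} $\Psi\circ\Phi = I + K$ with $K$ nilpotent, using additivity of area and index around a cycle of critical points together with the area--index relation and transversality. This is both simpler and safer: in the strongly negative regime it is exactly the one-parameter families where bubbling is hardest to control, and a degeneration of the glued quilt to the trivial strip risks reintroducing the pathologies you set out to avoid. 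If you insist on a cobordism argument you must check compactness of the parametrized moduli space (including its own $Y$-end bubbles), which is extra work the nilpotency trick sidesteps.
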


In all of the cases, the main idea is to construct a particular homomorphism $$\Phi:
HF(L_0,\ldots,L_r,L_{r+1},\ldots,L_{k-1}) \rightarrow HF(L_0,\ldots, L_r \circ
L_{r+1}, \ldots, L_{k-1})$$  Once $\Phi$ is constructed, a simple energy argument
shows that $\Phi$ is an isomorphism.  

The main motivation for proving Theorem \ref{negative} is an application to an
explicit example. Namely, it is used to get rid of a technical
assumption in the proof of an isomorphism between Lagrangian matching invariants and
Heegaard Floer homology of $3$-manifolds, which appeared in a previous work of the
first author \cite{lekili}. Our main construction was also used in $\cite{manwood}$ in order to prove the topological invariance of their symplectic construction of a Floer homology group which they conjecture to be isomorphic to a version of instanton Floer homology.

In order to avoid repetition, we will not give all the
details involved in the definition of holomorphic quilts and Floer cohomology of a cyclic
set of Lagrangian correspondences. The more comprehensive discussion of foundations of
this theory is available in \cite{WW}.

\paragraph{\bf Acknowledgments:} We would like to thank Denis Auroux, Ciprian
Manolescu, Sikimeti Ma'u, Tomasz Mrowka, Timothy Perutz and Chris Woodward for helpful
comments on an early draft of this paper. The first author is thankful for the support
of Mathematical Sciences Research Institute and Max-Planck-Institut f\"ur Mathematik
during the preparation of this work. The second author thanks Columbia University and the Topology RTG grant. We are grateful to the anonymous referee whose careful reading and critical comments helped improve the text significantly.

\section{Morphisms between Floer cohomology of Lagrangian correspondences }

\subsection{Chain complex of a cyclic set of Lagrangian correspondences} 
\label{definition}

Let us recall that when $L_{(0)}$ and $L_{(1)}$ intersect transversely the chain complex $CF(\underbar{\em{L}})$ associated with $\underbar{\em{L}} = (L_1,\ldots,L_k)$ is the
freely generated group over a base ring $\Lambda$ by the generalized intersection points
$\mathcal{I}(\underbar{\em{L}})$ where 
\[ \mathcal{I}(\underbar{\em{L}}) = \{\underbar{x}=(x_1,\ldots, x_k)\ | \  (x_k,x_1) \in
L_1,(x_1,x_2)\in L_2,\ldots, (x_{k-1},x_k) \in L_k \} \] 

The role of $\Lambda$ here is no different than its role in the usual Lagrangian Floer
cohomology. We will mostly take $\Lambda$ to be $\f{Z}_2$ (or more generally Novikov
rings over a base ring of characteristic $2$) in order to avoid getting into sign
considerations. The full discussion of orientations in this set-up appeared in
\cite{WWorient}, from which one expects that under assumptions on orientability of
the relevant moduli spaces (say when $L_i$ are relatively spin), our results still hold over $\f{Z}$. 

More generally, we can choose Hamiltonian functions $H_i : [0,\delta_i] \times
M_i \to \f{R}$ and perturb $\underbar{\em{L}}$ with a Hamiltonian isotopy on
each $M_i$ to ensure that $L_{(0)}$ and $L_{(1)}$ intersect transversely, so
that $\mathcal{I}(\underbar{\em{L}})$ is a finite set. It is an easy lemma to
show that for a generic choice of $(H_i)_{i=1,\ldots,k}$, transversality of
$L_{(0)}$ and $L_{(1)}$ holds after the perturbation, in particular the set
$\mathcal{I}(\underbar{\em{L}})$ is finite (see \cite{WWc} page 7). From now
on, we will always assume that $L_{(0)}$ and $L_{(1)}$ are perturbed into
general position and we will take $H_i \equiv 0$ for all $i$, so that
$\mathcal{I}(\underbar{\em{L}})$ consists of a finite set of generalized
intersection points as defined in the beginning of this section.

Next, to define the differential on $CF(\underbar{\em{L}})$, for each $i=1,\ldots, k$, we choose a compatible
almost complex structure $J_i$ on $M_i$ and extend
the definition of the Floer differential to our setting in the following way. Let
$\underbar{x}, \underbar{y}$ be generalized intersection points in
$\mathcal{I}(\underbar{\em{L}})$. We define the moduli space of finite energy quilted
holomorphic strips connecting $\underbar{x}$ and $\underbar{y}$ by
 \begin{eqnarray*} 
\mathcal{M}(\underbar{x},\underbar{y}) = \{ u_i : \f{R} \times [0,\delta_i] \to M_i |
\ \delbar_{J_i, H_i} u_i := \del_{s} u_i + J_i (\del_t u_j - X_{H_i}(u_i)) = 0 ,\\
 E(u_i) := \int u_i^*\omega_i - d(H_i(u_i)) dt < \infty \\ \text{lim}_{s\to-\infty}
u_i(s,\cdot) = x_i , \text{\ lim}_{s\to +\infty} u_i(s,\cdot) = y_i \\
 (u_i(s, \delta_{i}), u_{i+1}(s,0)) \in L_{i+1}\  \text{for all}\ i=1,\ldots k \} /
\f{R} 
\end{eqnarray*}

Under monotonicity assumptions (as in Theorem \ref{positive}), it is proven in \cite{WW} (with corrections from \cite{WWL}) that, given
$(\delta_i)_{i=1,\ldots,k}$ and $(H_i)_{i=1,\ldots,k}$, there is a Baire second
category subset of almost complex structures $(J_i)_{i=1,\ldots,i=k}$ for which these
moduli spaces are cut out transversely and compactness properties of the usual Floer
differential carry over. It is straightforward to check that the same result holds
when we replace the monotonicity assumptions by the set of assumptions in the
statement of Theorem \ref{isomorphism} (for more details, see the proof of Theorem
5.2.3 in \cite{WW}). As we check in the proof of Theorem \ref{negative}, the
assumptions of Theorem \ref{negative} also gives rise to well-defined moduli spaces.
Therefore, in either case one can define the Floer differential for a cyclic set of
Lagrangian correspondences by :
\[ \del \underbar{x} = \sum_{\underbar{y} \in \mathcal{I}(\underbar{\em{L}})}
\# \mathcal{M}(\underbar{x},\underbar{y}) \underbar{y} \]
where $\#$ means counting isolated points modulo 2. 

\begin{remark} If one has the additional choices in place so that the moduli
spaces $\mathcal{M}(\underbar{x},\underbar{y})$ are oriented (cf. \cite{Seidel}, \cite{WWorient}) , then $\#$ would
mean the signed count of isolated points. Similarly, if  one uses a Novikov ring as
the base ring $\Lambda$, then the above differential should be modified
accordingly as usual to accommodate various other quantities of interest
(homotopy class, area,\ldots etc.). The same remark applies to any of the
moduli spaces and corresponding counts that we use in this paper.
\end{remark} 

The compactness and gluing properties of the above moduli spaces allow one to prove that the
differential squares to zero, hence we get a well-defined Floer cohomology group. We
refer the reader to Proposition 5.3.1 in \cite{WW} for a continuation argument which shows that the resulting group
is independent of the choices of $(\delta_i, H_i, J_i)_{i=1,\ldots,k}$.

Following \cite{WWc}, we will prove Theorems \ref{isomorphism},\ref{positive} and
\ref{negative} in a special case
(the general case is proved in exactly the same way).  Let $(M_i,\omega_i)_{i=0,1,2}$ be
symplectic manifolds of dimension $2n_i$ and let $$L_0\subset M_0,\  L_{01}\subset M^-_0\times M_1,\
L_{12}\subset M_1^-\times M_2,\  L_2\subset M^-_2$$ be compact Lagrangian submanifolds
such that the geometric composition $L_{02}=L_{01}\circ L_{12}\subset M_0^-\times M_2$
is embedded. As discussed above, we can perturb $L_0$ and $L_2$ so that
the generalized intersections of $(L_0,L_{01},L_{12},L_2)$ as well as
$(L_0,L_{02},L_2)$ are transverse.  Our goal is to construct a map $$\Phi:
HF(L_0,L_{01},L_{12},L_2) \rightarrow HF(L_0,L_{02},L_2) $$ which we will prove to be
an isomorphism.  Note that there is an obvious bijection of the chain groups
$$CF(L_0,L_{01},L_{12},L_2)\cong CF(L_0,L_{02},L_2)$$  The map $\Phi$ will not
necessarily be induced by this bijection.  As we will later demonstrate, it will differ from this bijection possibly by a nilpotent matrix.
\subsection{Defining the quilt}
\label{construction}

Our construction of $\Phi$ is summarized in Figure \ref{quilt} below.  
\begin{figure}[ht]
\centering
\includegraphics{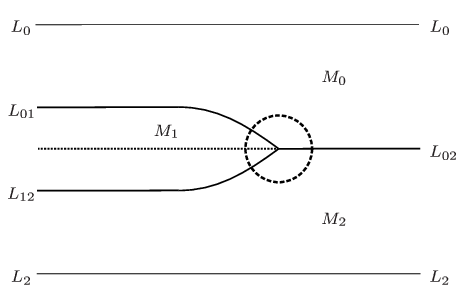}
\caption{The geometric composition map}
\label{quilt}
\end{figure}

Let $\Sigma$ be the pictured {\em quilt} (without the dotted line segment and
the dotted circle). More precisely, let $\Sigma = \Sigma_{0} \cup \Sigma_1 \cup
\Sigma_2 $, where topologically speaking, each $\Sigma_i$ is homeomorphic to a
closed unit disk with punctures at the boundary. Namely,  $\Sigma_0 = D^2
\backslash \{-1,-i,1\}$ , $\Sigma_1 = D^2 \backslash \{-1,1\}$ and $\Sigma_2 =
D^2 \backslash \{-1,i,1\}$. These are the embedded regions (patches) in Figure
$\ref{quilt}$. Ignoring the dotted curves, $\Sigma_i$ is embedded in the region
labelled $M_i$ and $\Sigma$ is obtained by identifying the connected components of the boundary of $\Sigma_i$ as indicated in Figure \ref{quilt}. As pictured, $\Sigma$ has 3 (resp. 2) boundary punctures on the
left (resp. right) which we refer to as incoming (resp. outgoing) ends. There
is also the puncture in the middle, which we will refer to as the Y-end. (One can visualize the Y-end as a semi-infinite cylinder where $3$ semi-infinite strips are glued together along the 3 solid (straight, parallel) lines that go to infinity. On each patch $\Sigma_i$ we fix a complex structure $j_i$ with real analytic boundary conditions as in \cite{WWsurf}. In short, this means that the
seams, the solid curve segments  in Figure \ref{quilt},  are embedded as real
analytic sets in $\Sigma$. A concrete way of arranging such $j_i$ is as
follows: Take a thin neighborhood of each seam in $\Sigma$ and identify it with
a small neighborhood $\f{R} \times (-i\epsilon,i\epsilon)$ of $\f{R}$ in
$\f{C}$ , and choose $\{j_i\} $ to be holomorphically compatible with this
identification. As in Figure \ref{quilt}, let us label the maps from the three
patches as $u_i:\Sigma_i\rightarrow M_i$ with $i=0,1,2$. Along the boundary components of these patches, these have the labeled ``seam conditions'' as in the picture. This means first of all that there are choices of diffeomorphisms $\phi_{ij}$ between adjacent boundary components of each patch (the way we constructed the
$\{j_i\}$ near the seams by using the embedding of $\Sigma_i$ in $\Sigma$
determines these choices). Now, if we consider the maps from adjacent patches,
say $u_i$ and $u_j$, as a map $(u_i, u_j) : \Sigma_i \times \Sigma_j \to M_i
\times M_j$ of product manifolds, for each point $x$ in the boundary of $\Sigma_i$ adjacent to $\Sigma_j$, we should have that $(u_i(x), u_j( \phi_{ij}(x))) \in L_{ij}$ where $L_{ij}$ is the labeled Lagrangian submanifold in the product $M_i \times M_j$. What we describe here is a particular example of a holomorphic quilt. We refer to Definition 3.1
in \cite{WWsurf} for a more detailed and general definition of a holomorphic
quilt. 

We may identify the region inside the dotted circle, which is a neighborhood of the Y-end, with
$(0,\infty)\times [0,1]$ mapping to $M^-_0\times M_1\times M^-_1\times M_2$.
More precisely, first we split the strip corresponding to $u_1:\Sigma_1\to M_1$
along the dotted horizontal seam in Figure \ref{quilt} and put the diagonal
seam condition $\Delta \subset M_1\times M^-_1$. This has no effect on the
moduli space that we consider. However, now at the Y-end we can ``fold'' the
strip to get the desired map.  Specifically, at the Y-end instead of looking at
maps from different strips to different manifolds, one can consider a single
map from $(0,\infty) \times [0,1]$ to the product $M^-_0\times M_1\times
M^-_1\times M_2$.  Therefore, we choose our complex structure $j_i$ so that
near the Y-end they are identified with the standard complex structure on
$(0,\infty) \times [0,1]$.  Similarly, we can choose $j_i$ near the incoming
and outgoing ends so that we can identify our strips with $(-\infty,0]\times
[0,1]$ and $[0,\infty)\times [0,1]$ and we fix these choices once and for all. (For a standard discussion about these choices see the discussion of strip-like ends in \cite{Seidel} Section (8d)). At the Y-end, let us label the map
obtained by folding by $$v:(0,\infty)\times [0,1] \rightarrow M^-_0\times
M_1\times M^-_1\times M_2=\underbar{M}$$ This has the seam conditions
$v(s,0)\in L_{01}\times L_{12}$ and $ v(s,1)\in L_{02}\times \Delta$. (Note that at this point we do not impose any condition on the behavior of $v$ as $s \to \infty$. It will be seen to converge to an intersection point of $L_{01} \times L_{12}$ and $L_{02} \times \Delta$ as a consequence of holomorphicity.) Next, we
would like to specify the complex structures on each $M_i$.  Assume that we
have chosen $J_i$ on $M_i$ such that $HF(L_0,L_{01},L_{12},L_2)$ and
$HF(L_0,L_{02},L_2)$ are both defined.  In general, such $J_i$ may need to be
$t$-dependent near $$(s,t)\in [0,\infty)\times [0,1]\cup (-\infty,0]\times
[0,1]$$ to ensure transversality for the moduli spaces that appear in the
definition of the Floer differential.  Note that this specifies
$\underbar{J}=J_0\times -J_1\times J_1\times -J_2$ on $M^-_0\times M_1 \times
M^-_1\times M_2$.  To ensure transversality for the moduli space of quilted
maps, we now introduce a domain dependent $J(z)$ on $M$. Pick a small
holomorphically embedded disk $D\subset (0,\infty)\times (0,1)$ (Note that this
is an interior disk).  We define $J(z)$ by letting $J(z)=\underbar{J}$ outside
$D$ and letting $J(z)$ be chosen generically from the set of compatible complex
structures inside $D$.  Such a $J(z)$ need not preserve the product structure
on $\underbar{M}$.  A similar construction in quilted Floer theory already
appears in \cite{PerutzG}.  We will denote such domain dependent complex
structures for our quilt, simply by $J$.

\begin{definition} \label{defin} Let $\underbar{x},\underbar{y}$ be two generalized intersection points for
$(L_0,L_{02},L_2)$ (or equivalently $(L_0,L_{01},L_{12},L_2)$). Let
$\mathcal{M}_{J}(\underbar{x},\underbar{y})$ be the set of all finite energy maps
$\underbar{u}=(u_i)_{i=0}^2$ that are
holomorphic with respect to $J$, have the quilted Lagrangian boundary conditions and
converge to $\underbar{x}$ on the incoming end and to $\underbar{y}$ on the outgoing end.  
\end{definition}       

Note that $L_{02}\times \Delta$ and $L_{01} \times L_{12}$ intersect cleanly in
$$\tilde{L}_{02}= (L_{02}\times \Delta) \cap (L_{01}\times L_{12})$$ which is
diffeomorphic to $L_{02}$. By definition, this means that $$T\tilde{L}_{02} = T(L_{02}\times \Delta)
\cap T(L_{01}\times L_{12})$$ The finite energy assumption guarantees
that the map near the Y-end has exponential decay.  More precisely, at the Y-end we have a holomorphic map
$$v:(0,\infty)\times [0,1]\rightarrow M^-_0\times M_1\times M^-_1\times M_2$$
for which we have the following decay estimate (see \cite{WoodG}, lemma 2.5 or \cite{IvShev}, appendix 3):
\begin{lemma}
There exists $\ep_0>0$, such that for any holomorphic $v$ with finite energy there exists $C$ such that $$sup_{t\in [0,1]}|\nabla v(s,t)|\leq Ce^{-\ep_0 s}$$ 
\end{lemma}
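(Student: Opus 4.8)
The plan is to treat $v$ near the Y-end as a negative gradient trajectory of the local action functional on the path space $\mathcal{P}(L_{01}\times L_{12},\, L_{02}\times\Delta)$ and to exploit that its critical set $\tilde{L}_{02}$ is Bott-nondegenerate, which is precisely the clean-intersection hypothesis $T\tilde{L}_{02} = T(L_{02}\times\Delta)\cap T(L_{01}\times L_{12})$. Observe first that near the Y-end the domain-dependent structure $J(z)$ equals $\underbar{J}$, since the perturbation disk $D$ is a fixed interior disk disjoint from a neighborhood of the Y-end; hence $v$ genuinely solves $\del_s v + \underbar{J}\,\del_t v = 0$ with the Lagrangian seam conditions $v(s,0)\in L_{01}\times L_{12}$, $v(s,1)\in L_{02}\times\Delta$, and standard $\delbar$-theory for maps with totally real boundary applies.

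\emph{Step 1 (energy decay and $C^1_{\mathrm{loc}}$ convergence).} Finite energy gives $\int_{[s,\infty)\times[0,1]}|\nabla v|^2 \to 0$ as $s\to\infty$. Combining this with the local mean-value (or $\hbar$-regularity) estimate for $J$-holomorphic maps with totally real boundary — no bubbling need be excluded here, one only uses the a priori bound valid once the energy on a half-ball drops below the relevant threshold — one gets $\sup_{t\in[0,1]}|\nabla v(s,t)| \to 0$. Consequently $v(s,\cdot)$ subconverges in $C^1$ to a constant path at some point $p$, which lies in $L_{01}\times L_{12}$ and in $L_{02}\times\Delta$, i.e. $p\in\tilde{L}_{02}$.

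\emph{Step 2 (local model and spectral gap).} Work in a Weinstein-type neighborhood of $p$ in which $L_{01}\times L_{12}$, $L_{02}\times\Delta$ and $\tilde{L}_{02}$ become linear subspaces; the relevant asymptotic operator is $A\xi = -\underbar{J}_0\,\xi'$ on $L^2([0,1];\f{R}^{2N})$ with domain $\{\xi\in H^1 : \xi(0)\in V_0,\ \xi(1)\in V_1\}$, where $V_0\cap V_1$ is linear of dimension $\dim\tilde{L}_{02}$. This operator is self-adjoint with discrete spectrum; its kernel is exactly the constant paths valued in $V_0\cap V_1$, and there is a gap $\ep_0>0$ separating $0$ from the rest of the spectrum. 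Then write $v(s,t)=\exp_{\gamma(s)}\xi(s,t)$ with $\gamma(s)\in\tilde{L}_{02}$ the orthogonal projection of $v(s,\cdot)$ and $\xi(s,\cdot)\perp\ker A$, and set $g(s)=\tfrac12\int_0^1 |\xi(s,t)|^2\,dt$. Differentiating twice in $s$, using the PDE, and using the spectral gap (the nonlinear and curvature terms are higher order once $\sup_t|\nabla v(s,t)|$ is small) yields $g''(s)\geq (2\ep_0)^2 g(s) - C g(s)^{3/2}$ for $s\geq s_0$; since $g(s)\to 0$, an ODE comparison forces $g(s)\leq C e^{-2\ep_0 s}$, and $\gamma(s)$ then converges, so the family itself (not just a subsequence) converges. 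Finally, interior and boundary elliptic estimates for $\delbar$ upgrade this $L^2$-in-$t$ decay of $\xi$ to $\sup_{t\in[0,1]}|\nabla v(s,t)|\leq C e^{-\ep_0 s}$, after possibly shrinking $\ep_0$.

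\emph{Main obstacle.} The one genuinely delicate point is that the intersection is clean rather than transverse: one must split off the kernel direction of $A$, control the drift $\gamma(s)$ of the projection along $\tilde{L}_{02}$, and verify that Bott-nondegeneracy produces a uniform spectral gap. This is exactly the content of \cite{WoodG}, Lemma 2.6 and \cite{IvShev}, appendix 3, so we only indicate the argument rather than reproduce it in full.
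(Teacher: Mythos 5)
The paper does not prove this lemma; it simply cites \cite{WoodG}, Lemma 2.6 and \cite{IvShev}, Appendix 3, and your sketch reproduces the standard Morse--Bott exponential decay argument one finds there (finite energy plus $\ep$-regularity gives gradient decay and subconvergence to a point of $\tilde{L}_{02}$, then the spectral gap of the asymptotic operator at the clean intersection yields a differential inequality for $\|\xi(s,\cdot)\|_{L^2}^2$, and elliptic bootstrap upgrades to $C^1$ decay), with your noted caveat about controlling the drift $\gamma(s)$ along $\tilde{L}_{02}$ being precisely the part that requires care. One small imprecision: the perturbation disk $D$ actually sits inside the half-strip $(0,\infty)\times(0,1)$ rather than away from the $Y$-end; what matters is that $D$ is fixed and compact, so $J(z)=\underbar{J}$ for all sufficiently large $s$, which is all the asymptotic estimate uses.
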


Here the gradient is taken with respect to a reference metric and the constant
$C$ only depends on this metric. This lemma combined with Gromov-Floer compactness
implies that as $s\rightarrow \infty$ each $v$ converges exponentially fast to
some point $z\in \tilde{L}_{02}$ and all derivatives of $v(s,t)$ converge to
$0$ exponentially fast. We will denote the point of convergence by $v(\infty)$.

Note that the use of Gromov-Floer compactness is justified by the following
standard argument:  We cover our quilt by a finite number of domains such that
each domain can be folded to yield a map of a holomorphic curve from a single
Riemann surface (possibly with boundary and strip-like ends) into a product of
symplectic manifolds. (Note that the folding can be carried out even when there
are boundary conditions as one requires seams to be embedded as real-analytic
subsets). On each domain we apply the usual Gromov-Floer compactness to deduce
convergence in compact subdomains outside a finite collection of points. This
implies Gromov-Floer compactness holds in any compact subdomain of a quilt.
Finally, at the strip-like ends, we can again fold and reduce to the standard
Gromov-Floer compactness statement. Putting this all together yields the
desired Gromov-Floer compactness statement for a quilt. (See Chapter 4 of
\cite{MS} for a detailed discussion of compactness in the case the domain is compact which applies to the setting of holomorphic quilts without any change and Theorem 2.14 of \cite{WoodG} concerning strips with clean intersection Lagrangian boundary conditions).

\subsection{Morse-Bott intersections and transversality}
Given that any element $u\in
\mathcal{M}_{J}(\underbar{x},\underbar{y})$ has exponential decay for
some uniform $\ep_0>0$ at the Y-end, we can view  $
\mathcal{M}_{J}(\underbar{x},\underbar{y})$ as the zero set of a
Fredholm section of a Banach bundle defined using a norm with exponential weights at the Y-end. We briefly review this
construction with the purpose of identifying the relevant tangent spaces. \\\\ 
 Fix some $p>2$, intersection points $x\in CF(L_0,L_{01},L_{12},L_2)$ and $y\in
CF(L_0,L_{02},L_2)$.  Let $(0,\infty)\times [0,1]$ be a neighborhood of the Y-end in
the
quilt $\Sigma$.  Let $\underline{\Sigma}=\Sigma-(1,\infty)\times [0,1]$ be the
complement of a slightly smaller end.  On the open subdomain $\underline{\Sigma}$, we define the
Banach manifold $B_1$ of all $L^p_{1,\ep}$ maps with quilted boundary
conditions (seam conditions) that converge to $\underbar{x}$ on the incoming end and to $\underbar{y}$ on the
outgoing end. For any sufficiently small $\ep>0$, we may define on $(0,\infty)\times
[0,1]$ the Banach manifold
$B_2$ of all $L^p_{1,\ep}$ maps $$v:(0,\infty)\times [0,1]\rightarrow
M_0\times M_1\times M_1\times M_2=\underbar{M}$$
with Lagrangian boundary conditions $v(s,0)\in L_{01}\times L_{12}$, $ v(s,1)\in
L_{02}\times \Delta$ and exponential decay with coefficient $\ep$. For a recent review
of exponential weights
(with references to older treatments) see \cite{WoodG}. Any element $v\in B_2$
converges to some point on the manifold $ (L_{01}\times L_{12}) \cap
(L_{02}\times \Delta)$.  The tangent space to such $v$ are a pair
$$v'=(v'_a,v'_b)$$ where $v'_a$ is a section of $v^*(T\underbar{M})$ with
totally real boundary conditions and exponential decay at infinity, while $v'_b$ is an element of the finite
dimensional space
$$T_{v(\infty)}( ( L_{01}\times L_{12}) \cap  (L_{02}\times \Delta))$$ The norm on $v'$ is specified by   
 $$\left( \int_{(0,\infty)\times [0,1]}|
e^{\ep s}v'_a|^p+|\nabla (e^{\ep s}v'_a)|^p dsdt \right)^{1/p}+|v'_b|$$
Here we may use any norm $| . |$ on the tangent spaces induced by  smooth Riemannian metrics on the $M_i$'s.  Since we assume that each $M_i$ is compact, all such metrics lead to equivalent topologies on the tangent space to $v$. \\\\ 
A chart of $B_2$ near $v$ is obtained by applying the exponential map to all such $v'$ where the
norm of  $v'$ is sufficiently small. To be precise, one must use a
$t$-dependent metric on $\underbar{M}$ which makes $L_{01}\times L_{12}$
totally geodesic for $t=0$ and $L_{02}\times \Delta$ totally geodesic for
$t=1$. See \cite{WoodG} Section 2.2) for more details.   Finally, we define our Banach
manifold $\mathcal{B}_{\epsilon}(\underbar{x},\underbar{y})$ over $\Sigma$ as
pairs $(w,v)\subset (B_1,B_2)$ which agree on the overlap $(0,1)\times [0,1]$.\

Now, let $\mathcal{V}$ be the Banach bundle over
$\mathcal{B}_{\epsilon}(\underbar{x},\underbar{y})$ whose fibre over
$\underbar{u}$ is given by $\Omega^{0,1}(\Sigma, \underbar{E})$ where
$\underbar{E}$ is the pullback of the tangent bundles of $M_i$ and
$\Omega^{0,1}(\Sigma, \underbar{E})$ denotes the space of $(0,1)$-forms with
finite $L^p$ norm and with exponential decay at the Y-end.  On each of the two
pieces, standard arguments (see \cite{WW} for $B_1$ and \cite{WoodG} for
$B_2$) imply that the $\delbar$ operator is a restriction of a Fredholm
operator to an open domain. Note that this does \emph{not} mean that
restriction of $\delbar$ to $B_1$ or $B_2$ is a Fredholm operator; we are 
saying that there exists a way to embed the open domains $\underline{\Sigma}$
and $(1,\infty) \times (0,1)$ to bigger domains where one has extensions of
the corresponding $\delbar$ operators to Fredholm operators. One can arrange
this easily in the case of $B_1$ and $B_2$. For example, one can take the
double of each domain, and at the same time doubling all the structure of Banach bundles and the sections defined by $\delbar$ operators (see \cite{HLS} for a discussion of such doubling in a related problem). In
that way, $B_1$ is embedded into a standard Fredholm problem for a holomorphic
quilt as studied in (\cite{WW}, pg. 877) and $B_2$ is embedded into a standard
Fredholm problem that underlies the definition of Floer differential for the
Lagrangian Floer homology of $(L_{01} \times L_{12}, L_{02} \times \Delta)$ (\cite{WoodG}, pg. 14).\\\\
Knowing that the $\delbar$ operator is a restriction of a Fredholm operator to
an open domain for both $B_1$ and $B_2$ allows us to apply a standard patching
argument (see for example \cite{Donald} pg. 50) which implies that the $\delbar$
operator defines a Fredholm section over
$\mathcal{B}_{\epsilon}(\underbar{x},\underbar{y})$.  Note that $\ep$ has to be
chosen sufficiently small to ensure that $\delbar$ on $B_2$ is the restriction of a
Fredholm operator to an open domain and that each element of
$\mathcal{M}_{J}(\underbar{x},\underbar{y})$ actually belongs to
$\mathcal{B}_{\epsilon}(\underbar{x},\underbar{y})$.\\\\ For future reference,
note that the linearization of $\delbar$  at some $v$ on $(0,\infty)\times
[0,1]$ has the form: $$D\delbar\oplus K: L^p_{1,\ep}(\underbar{E},
\underbar{F}) \oplus T_{v(\infty)}( ( L_{01}\times L_{12}) \cap (L_{02}\times
\Delta))\rightarrow L^p_{0,\ep}(\Omega^{0,1}(\underbar{E}))$$ 
Here $\underbar{E} = v^* T\underbar{M}$ and $\underbar{F}$ is the Lagrangian subbundle given by $v^* (T(L_{01} \times L_{12}))$ along $\{0, \infty\} \times \{0\}$ and $v^* (T(L_{02} \times \Delta))$ along $\{0, \infty\} \times \{ 1 \}$; $K$ is some operator with a finite dimensional
linear domain $T_{v(\infty)}( ( L_{01}\times L_{12}) \cap  (L_{02}\times \Delta))$.  The
specific form of $K$ depends on the choice of the $t$-dependent metric $g_t$ and will not need to be made explicit for our purposes.  The special case
when $v$ is constant will be discussed below (Section 2.4). We now give a proof of the
following claim:

\begin{proposition} \label{transverse} For a generic choice of $J$,
$\mathcal{M}_{J}(\underbar{x},\underbar{y})$ is a smooth finite
dimensional manifold.
\end{proposition}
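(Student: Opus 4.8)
The plan is to apply the standard Sard--Smale machinery to the Fredholm section $\delbar$ of the Banach bundle $\mathcal{V}$ over $\mathcal{B}_{\epsilon}(\underbar{x},\underbar{y})$, using the domain-dependent perturbation of $\underbar{J}$ inside the interior disk $D$ as the space of perturbation data. First I would set up the universal moduli space $\mathcal{M}^{\mathrm{univ}} = \{(\underbar{u},J) : \delbar_J \underbar{u} = 0\}$, where $J$ ranges over a Banach manifold of $C^\ell$ (or $C^\infty_\epsilon$) compatible almost complex structures on $\underbar{M}$ that agree with $\underbar{J}$ outside $D$. The key step is to show that the universal linearized operator
\[ D\delbar \oplus K \oplus \tfrac{\partial}{\partial J} : T_{\underbar{u}}\mathcal{B}_{\epsilon} \oplus T_J \mathcal{J}(D) \to \mathcal{V}_{\underbar{u}} \]
is surjective at every $(\underbar{u},J)$ in the universal moduli space; then the implicit function theorem makes $\mathcal{M}^{\mathrm{univ}}$ a Banach manifold, and the Sard--Smale theorem applied to the projection $\mathcal{M}^{\mathrm{univ}} \to \mathcal{J}(D)$ yields a comeager set of regular values $J$ for which $\mathcal{M}_J(\underbar{x},\underbar{y})$ is a smooth manifold of dimension equal to the Fredholm index. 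A Taubes-type argument (intersecting over an exhausting sequence of finite-order strata) upgrades the $C^\ell$ statement to genuine $C^\infty$ perturbations.

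The heart of the surjectivity argument is an elliptic unique-continuation / injectivity-of-evaluation argument localized inside $D$. Suppose $\eta \in \mathcal{V}_{\underbar{u}}$ is $L^q$-orthogonal (for the dual exponent) to the image of $D\delbar \oplus K$ and annihilated by all infinitesimal variations $Y \mapsto \tfrac{1}{2} Y(\underbar{u}) \circ d\underbar{u} \circ j$ supported in $D$. Orthogonality to $\mathrm{im}(D\delbar)$ forces $\eta$ to satisfy a formal adjoint equation, hence (by elliptic regularity and the similarity principle / Carleman estimates) $\eta$ vanishes on an open set if it vanishes on a set with an accumulation point; and the vanishing of the pairing against all $Y$ supported in $D$ forces $\eta \equiv 0$ on the subset of $D$ where $d\underbar{u}$ is injective and $\underbar{u}$ does not return. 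Here I would invoke the fact that $D$ is an \emph{interior} disk in $(0,\infty)\times(0,1)$, so there are no seam/boundary conditions to worry about inside $D$, and the somewhere-injectivity of $\underbar{u}|_D$ follows because a finite-energy quilted map that is constant on an open subset of a patch would, by unique continuation for the $\underbar{J}$-holomorphic equation on that patch together with the exponential decay at the $Y$-end, have to be constant near that end and thus trivial — contradicting $\underbar{x} \neq \underbar{y}$ unless the map is a constant solution, and in the constant case $\mathcal{M}_J$ consists of isolated, automatically transverse (after possibly also perturbing $H_i$) points, which is the special case flagged in the text.

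I would carry the steps out in the following order: (i) record that $\mathcal{B}_{\epsilon}(\underbar{x},\underbar{y})$ is a smooth Banach manifold and $\delbar$ a smooth Fredholm section, citing the patching construction already set up above; (ii) define $\mathcal{J}(D)$ and the universal moduli space; (iii) prove the somewhere-injective-point lemma for non-constant $\underbar{u}$ using unique continuation on patches plus the exponential decay Lemma at the $Y$-end; (iv) prove surjectivity of the universal operator via the orthogonality/unique-continuation argument above, being careful that $\eta$ has only $L^p$ regularity so one first bootstraps $\eta$ to be smooth where $\underbar{u}$ is immersed; (v) apply implicit function theorem and Sard--Smale; (vi) dispose of the constant solutions separately and run the Taubes trick to pass from $C^\ell$ to $C^\infty$.

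The main obstacle I anticipate is step (iv) combined with the regularity subtlety: the perturbation $J(z)$ is constrained to be supported in the fixed interior disk $D$, so to kill $\eta$ everywhere on $\Sigma$ one needs \emph{some} injective point of $\underbar{u}$ to lie inside $D$, and then propagate $\eta \equiv 0$ from $D$ to all of $\Sigma$ by unique continuation across the seams and up the ends — this requires knowing that no component $u_i$ is locally constant and that the quilted unique continuation principle applies through real-analytic seams (which is why real-analytic boundary conditions were imposed). If the image of $\underbar{u}$ happened to avoid $D$ entirely one would need to first choose $D$, or rather a small family of candidate disks, adapted to $\underbar{u}$; since the relevant moduli spaces are separable this can be arranged, but it is the point demanding the most care. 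The constant-solution case is genuinely different and must be handled by the separate transversality statement for the Floer data $(H_i,J_i)$ referenced earlier in the excerpt.
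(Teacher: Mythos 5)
Your Sard--Smale framework (universal moduli space, surjectivity of the extended linearization, unique continuation to propagate the vanishing of $\eta$) matches the implicit structure of the paper's argument, but the plan misses the two ideas that actually carry it.

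First, the worry about somewhere-injectivity and ``$\underbar{u}$ not returning,'' together with the proposed workaround of adapting a family of disks to $\underbar{u}$, is exactly what the \emph{domain-dependence} of $J$ is designed to avoid, and the paper says so explicitly. Because the perturbation $S(z, v(z))$ is allowed to depend on the domain point $z \in D$, the argument does not need $v|_D$ to be injective: even if $v(z_1) = v(z_2)$, one may choose $S$ to vary with $z$ and distinguish the two points. All that is needed is a single $z_0 \in D$ with $dv(z_0) \neq 0$, and such points are automatically dense in $D$ whenever $v$ is non-constant, since zeros of $dv$ for a pseudoholomorphic map are isolated. If $dv$ vanished on all of $D$, unique continuation would force $v$, hence $\underbar{u}$, to be globally constant. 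So a single fixed interior disk $D$ suffices, there is no need for a family of candidate disks, and the only dichotomy to consider is constant $v$ versus non-constant $v$.

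Second, your treatment of the constant solutions is incorrect. Perturbing $H_i$ does not dispose of them: when $\underbar{x} = \underbar{y}$ the constant $Y$-map is always present in $\mathcal{M}_J(\underbar{x},\underbar{x})$ and has to be shown regular, not removed. The paper instead proves \emph{automatic} transversality at the constant map for every compatible $J$. The index of the constant map is computed separately to be zero (the excision calculation in the next subsection), and then a Stokes/energy identity shows that any kernel element $u_i'$ --- a $J$-holomorphic map into $\mathbb{R}^{2n_i}$ with linear quilted Lagrangian boundary conditions and exponential decay at the strip-like ends --- has total energy zero, hence is constant, hence is zero since it decays at the ends. That computation, not genericity of the Floer data $(H_i,J_i)$, is the actual content of the constant case and is precisely what your outline defers to a hand-wave.
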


\begin{proof}
We need to verify that,
for a generic choice of $J$, the Fredholm section defined by $\delbar$ of the Banach bundle $\mathcal{V}$ over $B_\epsilon(\underbar{x}, \underbar{y})$ will be transverse to the
zero section. Let $\mathcal{J}$ be the space of almost complex structures
constructed above (see page 8). Recall that we consider domain dependent almost complex structures where
this dependence is at the Y-end and only on a small disk $D \subset (0,\infty) \times
(0,1)$.  Thus, given that $u_i\in \mathcal{M}_{J}(x,y)$, we need to
show that the
linearized operator $D\delbar(\underbar{u},J)$ is surjective.  Note
that as we use a
domain dependent $J$,  the  linearized operator is the sum of two terms - one term coming from
$D\delbar(\underbar{u})$ corresponding to variations of $\underbar{u}$
and the second term corresponds to variations of $J$. First assume that the map $v$ defined at the
Y-end is
non-constant. We will
show that any $L^q_{0, -\epsilon}$-section ($1/p+1/q=1$) $\eta$ of the dual bundle $\Omega^{0,1}(\underline{E}, \underline{F}) $ orthogonal to the image of the linearization must vanish around some
point in $D$.  Then the unique continuation principle will yield that
$\eta$ vanishes
identically.

We now write the linearization of our section on the disk $D \subset (0,\infty) \times
(0,1)$, where we required $J$ to have the domain dependence.   The main point here is
that since we allow our $J$ to be domain dependent, we do not need a somewhere
injective curve but simply a point $z_0\in D$ such that $dv(z_0)\neq 0$. Following the
argument in \cite[page 48]{MS}, the linearized operator has the following form on $D$:
$$D\delbar(v,J)=  D\delbar(v) + \frac{1}{2} S(v,z) \circ dv \circ j_{D}$$ Here,
$D\delbar(v)$
denotes the partial derivative holding $J$ fixed and $S(z,v) \circ{v} \circ j_{D}$
corresponds to linearization with respect to $J$, where $S(z,v)$ is a section of the
tangent space to $\mathcal{J}$ at $J$ which can be identified with the subspace of $S \in End(TM)$ such that $$SJ=-JS, \ \ \omega(S\cdot , \cdot)=-\omega(\cdot,S\cdot )$$  Now, suppose that
some section $\eta$ is orthogonal to the image.  Following \cite{MS}, we can choose
$S(z_0, v(z_0))$ such that $$\langle \eta(z_0), S(z_0, v(z_0)) \circ dv(z_0)\circ j_D
\rangle >0$$ whenever $\eta(z_0) \neq 0 $.  We can extend $S(z_0,v(z_0))$ to a small
neighborhood of $z_0$ by using a bump function.  Note that the resulting $S(z,v)$ is
domain dependent.  This shows that $\eta(z_0)=0$ for all $z_0$ where $dv(z_0)\neq 0$.
However, such $z_0$ are dense in $D$.    Since $\eta$ is in the kernel of a
$\delbar$-type operator, namely $(D\delbar(v))^*\eta =0$, it must vanish everywhere by
the unique continuation principle.

Finally, assume that $v$ is constant, thus by unique continuation
$\underbar{u}$ is constant. In the next subsection, we show that the index of
the linearization is 0 (Lemma \ref{index}) for the case of a constant almost
complex structure of split-type, that is, it comes from an almost complex
structure $J_i$ on $M_i$. Notice however that in Section \ref{construction}
(Definition \ref{defin}), we have required that almost complex structure $J$ is
of split-type except on a small disk $D$ in order to ensure transversality at
the constant solutions. On the other hand, for the purpose of calculation of
index we can always homotope our $J$ to a split-type almost complex structure
so the calculation of the next subsection ensures that the index of the
linearization is 0 for a regular $J$ (as in Definition \ref{defin}). 

We claim that for any regular $J$, $D\delbar(\underbar{u},J)$ is surjective.
In view of the index calculation, it is enough to show that the kernel of the
linearization at a constant map is zero. We may identify the image of $u_i$
with $0\in \Ar^{2n_i}$. An element $\underline{u}'$ of the kernel of the linearization is then a
triple of maps $u_i'$ from the quilt to $(\Ar^{2n_i},\omega_0)$. (Outside of the region where $J$ is non-split, we have that $\delbar_{J_i} u'_i=0$ but along the disk $D$ where $J$ is non-split we can only speak of $J$-holomorphicity for the folded map.) $u_i'$ have linear quilted Lagrangian boundary
conditions.  Thus, along each seam $(u_i',u_{i+1}')\in L'_{i,i+1}\subset
\Ar^{2(n_i+n_{i+1})}$ where $L'_{i,i+1}$ is a linear Lagrangian submanifold. 
By construction,
$u_i'$ have exponential decay near the incoming and outgoing ends. Near the Y-end, we can identify $u_i'$ with the folded map
$$v':(0,\infty)\times [0,1]\rightarrow \C^{n_0+2n_1+n_2}$$ which satisfies $\delbar_{J}v'=0$. Let us decompose
$v'$ as $$v'=v_a'+v_b'$$  where $v_a'$ has exponential decay at infinity and
$v_b'$ is a constant vector in the intersection of the linear Lagrangians. In particular, the total derivative of $v'$ has exponential decay near
infinity.  Note that in the part where $J$ does not split (i.e. near the Y-end)
by assumption $J$  is compatible with the symplectic structure on
$$(\Ar^{2n_0},-\omega_0)\times (\Ar^{2n_1},\omega_0) \times
(\Ar^{2n_1},-\omega_0) \times (\Ar^{2n_2},\omega_0)$$  

We claim that in fact each $u_i'$ is constant. Now, the standard
symplectic form $\omega_0 = \frac{1}{2} d(xdy-ydx)$ is an exact form and the
linear Lagrangians $L'_{i,i+1}$ are exact Lagrangians, therefore by Stokes'
theorem we have that the symplectic area vanishes: $$\sum_{i=1}^3 \int_{\Sigma_i}
(u_i')^{*}\omega_0 =0$$ Note that the use of Stokes' theorem is justified
since near the incoming and outgoing ends $u_i'$ have exponential decay and
near the Y-end we have that the total derivative of $v'$ decays exponentially.
Now, the fact that $\underline{u}'$ is holomorphic enables us to relate the symplectic area to the energy (see \cite[page 20]{MS}). Since we have that $J$ is non-split near the Y-end, in order express the energy, as before let us divide the domain into two pieces. Near the Y-end, we have the folded map $v'$ with domain $(0,\infty) \times [0,1]$. Denote by $\Sigma'_i = \Sigma_i \backslash  ( (0,\infty) \times [0,1] )$, the domain of $u'_i$ outside of the neigborhood of the Y-end. We can now write out the relation between symplectic area and the energy of a holomorphic $\underline{u}'$ as follows:
$$\sum_{i=1}^3 \frac{1}{2}\int_{\Sigma'_i} |du'_i|^2_{J_i}dvol_{\Sigma'_i} + \frac{1}{2} \int_{(0,\infty) \times [0,1]} |dv'|^2_{J} dvol  = \sum_{i=1}^3 \int_{\Sigma_i} (u'_i)^{*}\omega_0 = 0$$
where $J_i$ are almost complex structures on $M_i$ and $J = (J_i)_{i=1}^3$ over $(\Sigma'_i)_{i=1}^3$. 

Therefore, we conclude that the $u_i'$ are constant.  Since $u_i'$
converges to zero at the incoming and outgoing ends, this implies that $u_i'=v'=0$ as
desired.
\end{proof}

\subsection{Index computation for a constant map} 
\label{Mindex}
In this section we calculate the index of the linearization of $\delbar$ operator at a constant map. Although, everywhere else, we used Sobolev spaces $L^p_{1,\epsilon}$ and $L^p_{0,\epsilon}$ for $p>2$, in calculating the index we take $p=2$ as this makes the calculation of the index easier. Since the kernel and cokernel of the linearized $\delbar$ operator is smooth by elliptic regularity, this calculation immediately implies the index calculation for $p>2$.

\begin{lemma} 
	\label{index}
	Let $u \in \mathcal{M}_J(\underline{x},\underline{x})$ be a constant map, then the index of $u$ vanishes.
\end{lemma}

The linearized problem we will study is that of holomorphic quilts mapping into
$\mathbb{C}^n$ with linear Lagrangian boundary conditions.  As preparation for
the main result, we first review a standard Morse-Bott index calculation.  Let
$L$, $L' \subset \mathbb{C}^n$  be a pair of Lagrangian subspaces.  Let
$S=\Ar\times [0,1]$.  We consider the Fredholm map $$\delbar:
L^2_{1,\ep}(S;L,L') \rightarrow L^2_{0,\ep}(S)$$ for sufficiently small
$\ep>0$.  Here $L^2_{1,\ep}(S;L,L')$ denotes the weighted Sobolev space of
maps with $u(\cdot,0)\in L$ and $u(\cdot,1)\in L'$. Note that in view of the
restriction map $$L^2_{1}(S)\rightarrow L^2(\partial S )$$ we do not
need $u$ to be continuous to make sense of the linear Lagrangian boundary
condition. For a general discussion of regularity for the $\delbar$-operator
with totally real boundary conditions that include the rather special case we
are considering, see Theorem C.1.10 in \cite{MS}.  
\begin{lemma}\label{lemmaMB}
	$ind(\delbar)=-\text{dim}(L\cap L')$.     
\end{lemma}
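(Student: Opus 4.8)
The plan is to deform the boundary conditions to a "split" form for which the operator decouples into a direct sum of one-dimensional problems, compute the index in each one-dimensional case by hand, and then invoke the homotopy invariance of the Fredholm index to conclude. First I would use linear symplectic algebra: given a pair of Lagrangian subspaces $L, L' \subset \mathbb{C}^n$, there is a unitary change of coordinates together with a continuous path of pairs of Lagrangians $(L_\lambda, L'_\lambda)$, $\lambda \in [0,1]$, with $(L_0,L_0') = (L,L')$, along which the dimension of $L_\lambda \cap L'_\lambda$ stays constant, ending at a standard model: $\mathbb{C}^n$ splits orthogonally as $\mathbb{C}^d \oplus \mathbb{C}^{n-d}$ with $d = \dim(L\cap L')$, on the first factor $L_1 = L_1' = \mathbb{R}^d$ (so the intersection is all of $\mathbb{C}^d$), and on the second factor $L_1 = \mathbb{R}^{n-d}$, $L_1' = i\mathbb{R}^{n-d}$ (transverse). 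Since the weighted operator $\delbar$ depends continuously on the boundary data and remains Fredholm throughout (for $\ep$ small and fixed, because $\ep$ only needs to avoid the spectral values associated with the intersection, which do not move as long as $\dim(L_\lambda\cap L_\lambda')$ is constant), the index is unchanged along the deformation.

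Next I would compute the index in the two building-block cases on the strip $\Sigma = \mathbb{R}\times[0,1]$ with the small exponential weight $\ep$ at both ends. For the transverse one-dimensional piece, $L=\mathbb{R}$, $L'=i\mathbb{R}$ in $\mathbb{C}$: the unweighted operator already has index $0$ (a standard computation via separation of variables, the relevant eigenvalues being $\pi(k+\tfrac12)$, $k\in\mathbb{Z}$, none of which is $0$), and since $0$ is not an eigenvalue, inserting a sufficiently small weight $\ep>0$ at both ends does not cross any eigenvalue and leaves the index equal to $0$. For the degenerate one-dimensional piece, $L=L'=\mathbb{R}\subset\mathbb{C}$: the unweighted operator has a one-dimensional cokernel (or kernel) coming from the constant solution, corresponding to the eigenvalue $0$; putting a small positive weight $\ep$ at both ends shifts the relevant asymptotic operator's spectrum so that this zero mode is no longer in the weighted space, and a direct count (or the standard spectral-flow formula for weighted strips) gives index $-1$ for this piece. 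Assembling, the standard block contributes $d$ copies of $-1$ and $(n-d)$ copies of $0$, so $ind(\delbar) = -d = -\dim(L\cap L')$, and by homotopy invariance the same holds for the original $(L,L')$.

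The main obstacle I expect is the bookkeeping around the exponential weights: one must check that for the chosen small $\ep>0$ the operator is genuinely Fredholm for every pair in the deformation (i.e. $\pm\ep$ avoids the spectrum of the asymptotic operator $-J_0\partial_t$ with the given boundary conditions, whose spectrum is a discrete set depending only on $\dim(L_\lambda\cap L_\lambda')$), and that the weight is chosen on the correct side (the "$+\ep$" convention that pushes constants out of the space rather than into it) so as to obtain the sign in the claimed formula. I would handle this by fixing, once and for all, $\ep$ smaller than the smallest positive element of the spectrum of the asymptotic operator for any of the finitely many relevant intersection dimensions, and by carefully matching the weight convention with the one used later for the $Y$-end so that the index bookkeeping is consistent; everything else is either standard elliptic theory (Fredholmness and homotopy invariance of the index on weighted strips, as in the references cited) or the explicit one-dimensional computations sketched above.
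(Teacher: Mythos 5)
Your proposal is correct, but it takes a genuinely different route from the paper. The paper proves Lemma~\ref{lemmaMB} by a direct separation-of-variables argument, with no deformation of boundary conditions: one expands an arbitrary strip $u$ in eigenfunctions $\phi_\lambda$ of the self-adjoint operator $-i\partial_s$ on $[0,1]$ with boundary values on $L$ and $L'$, observes that exponential decay in the long direction kills every term in the kernel of $\delbar$, and identifies the cokernel (the kernel of the formal adjoint on the dually weighted space, with boundary conditions on $iL$, $iL'$) with the $\lambda=0$ modes, which are exactly the constants valued in $(iL)\cap(iL')$. This is quite short and entirely self-contained. Your approach instead invokes the linear normal form for pairs of Lagrangian subspaces, homotopy invariance of the Fredholm index along a path with fixed intersection dimension, and two elementary one-dimensional index computations. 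Both are standard; the paper's argument is more economical and avoids needing the Lagrangian normal form, while yours is more modular and generalizes readily (e.g.\ to the weighted half-strip or ``flask'' that shows up later in Section~\ref{Mindex}).

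One small imprecision to fix: you assert that the spectrum of the asymptotic operator $-J_0\partial_t$ with boundary conditions $(L_\lambda, L'_\lambda)$ ``depends only on $\dim(L_\lambda\cap L'_\lambda)$.'' That is not true --- only the multiplicity of the eigenvalue $0$ is determined by the intersection dimension; the nonzero spectrum varies continuously with the actual pair (e.g.\ for $L=\mathbb{R}^2$ and $L'_\theta=\mathbb{R}\times e^{i\theta}\mathbb{R}$, the gap depends on $\theta$). What you actually need, and what is true, is that along your compact path $[0,1]$ the smallest nonzero eigenvalue in absolute value is bounded below by some $\lambda_0>0$, so one can fix $\ep\in(0,\lambda_0)$ once and for all and keep Fredholmness and index constancy throughout the deformation. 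Phrased that way, the argument closes, and the rest (the 1D transverse and degenerate computations, and the sign convention on the weight) is exactly as you describe.
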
 
\begin{proof} A
		function $u:\Ar\times [0,1] \to (\mathbb{C}^n; L, L')$ may be
		written as $$u(s,t)=\sum_\lambda f(t)\phi_\lambda(s)$$ where
		$\phi_\lambda$ is an eigenfunction with eigenvalue $\lambda$ of
		the operator $-i\partial_s$ on $[0,1]$ with $\phi_\lambda(0)\in
		L$ and $\phi_\lambda(1)\in L'$. \\\\ The kernel of
		$\delbar=\partial_t+i\partial_s$ consists of maps
		$$u(s,t)=\sum_\lambda c_\lambda e^{t\lambda}\phi_\lambda(s)$$
		for some constants $c_\lambda$. However, since $\lambda$ is
		real and such solutions are required to have exponential decay
		for $t\rightarrow \pm \infty$, it must be that $c_\lambda =0$
		for all $\lambda$. Hence, we conclude that
		$\text{dim}\ker\delbar=0$.

By elliptic regularity, the cokernel can be identified with the kernel of $-\partial_t+i\partial_s$ on the
space of $L^2_1$ functions with exponential growth of at most $\ep$.  In addition,
these functions have boundary values on $iL$ and $iL'$.   Therefore, such maps consist
of $$u(s,t)=\sum_\lambda c_\lambda e^{-t\lambda}\phi_\lambda(s)$$ However, if $\ep$ is
smaller than the first nonzero eigenvalue $\lambda_0$, the only maps are those with
$\lambda=0$.  These are precisely the constant maps with values in $(iL)\cap (iL')$.
Therefore,  index $\delbar=-\text{dim}((iL)\cap (iL'))=-\text{dim}(L\cap L')$.
\end{proof} 
The calculation of the index for $$\delbar: L^2_{1,\ep}(S;L,L')
\rightarrow L^2_{0,\ep}(S)$$  computes the index for any $$\delbar: L^p_{k,\ep}(S;L,L')
\rightarrow L^p_{k-1,\ep}(S)$$ with $p\geq 2$ and $k \geq 1$.  Indeed, elliptic regularity for the $\delbar$-operator with linear Lagrangian boundary conditions implies that any element of the kernel/cokernel is smooth.  Strictly speaking, one must first prove that the corresponding problem for $L^p_{k,\ep}$-spaces is Fredholm.  As explained in \cite{Donald} (pg.58-60, 70-75) , one can convert the index problem over weighted spaces to an equivalent problem for unweighted spaces where the Fredholm property is standard.
The previous lemma is useful when considering Morse-Bott moduli spaces.  In particular,
consider the tangent space at the constant map of the moduli space of holomorphic
curves with Morse-Bott boundary conditions along $(L,L')$.  By definition, it is the
kernel of the map $$\delbar \oplus K: L^2_{1,\ep}(S;L,L')\oplus ( (L\cap L')\times
(L\cap L') ) \rightarrow L^2_{0,\ep}(S)$$  For the calculation of the index the
explicit form of the map $K$ is not relevant since it is a compact operator. Thus we have:
\begin{corollary}
$ind(\delbar \oplus K)= \text{dim}(L\cap L')$.
\end{corollary}

This is consistent with the
intuition that the Morse-Bott  case corresponds to constant holomorphic disks lying on
$L\cap L'$.\\\\ We will make use of excision for our index calculations. This is a
standard tool for computing the index of elliptic operators that goes back to the work
of Atiyah and Singer on the index theorem (\cite{AS}).  We review a simple version of it that is
tailored to our application.  For recent proofs, one may consult \cite{Ben}.  

\begin{figure}[ht]
\centering
\includegraphics[scale=0.9]{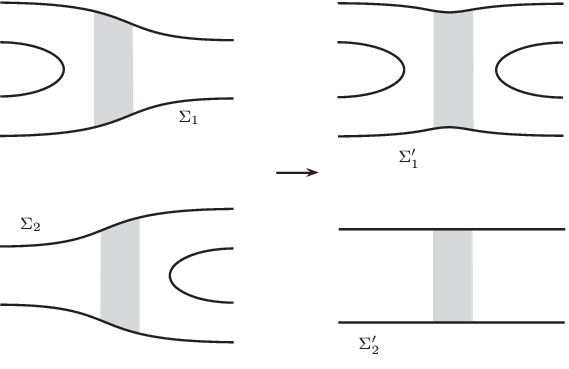}
\caption{Excision}
\label{excisionX}	
\end{figure}

Suppose we are given quilts $\Sigma_1$, $\Sigma_2$ each with a pair of complex vector bundles $E_i$ and $F_i$.  In addition, suppose we have $\delbar$-operators $$\delbar_i:\Gamma(E_i)\rightarrow \Gamma(F_i)$$ over each $\Sigma_i$.  At the boundaries, we assume that there are totally real boundary conditions.  This amounts
to a choice of a totally real subbundle $T_i$ of each $E_i$ over the boundary of
$\Sigma_i$.\\\\  Now, assume that each $\Sigma_i$ contains a separating strip
$(a,b)\times [0,1]$.   We assume there are isomorphisms  $F:E_{1|(a,b)\times
[0,1]}\rightarrow E_{2|(a,b)\times [0,1]}$ and $G:F_{1|(a,b)\times [0,1]}\rightarrow
F_{2|(a,b)\times [0,1]}$ which map $\delbar_1$ to $\delbar_2$ and $T_1$ to $T_2$, respectively.  We
may excise $\Sigma_i$ along the strips as in Figure \ref{excisionX} to form new quilts $\Sigma_1'$
and  $\Sigma_2'$ with corresponding bundles and $\delbar$-operators $\delbar_1'$ and
$\delbar_2'$. The excision theorem asserts that
$$\text{ind}(\delbar_1)+\text{ind}(\delbar_2)=\text{ind}(\delbar_1')+\text{ind}(\delbar_2')$$
A similar discussion applies when instead of a separating strip we have a separating
cylinder $(a,b)\times S^1$

\begin{figure}
\centering
\subfloat{Fig 1}{\label{fig:1}\includegraphics[width=0.35\textwidth]{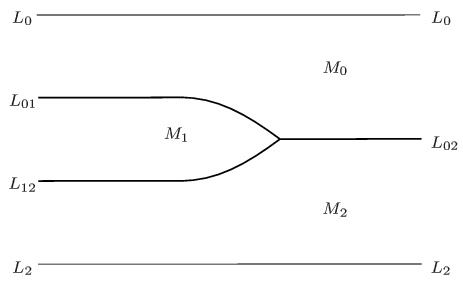}}                
\hspace{1cm} 
\subfloat{Fig 2}{\label{fig:2}\includegraphics[width=0.35\textwidth]{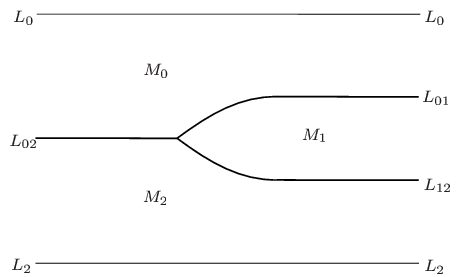}}
\\
 \subfloat{Fig 3}{\label{fig:3}\includegraphics[width=0.35\textwidth]{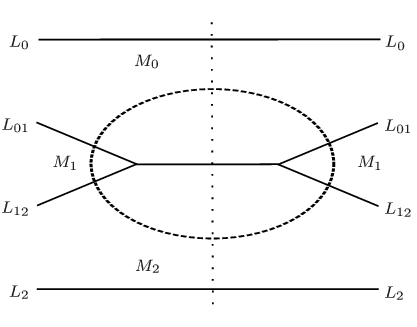}}
\hspace{1cm}
 \subfloat{Fig 4}{\label{fig:4}\includegraphics[width=0.35\textwidth]{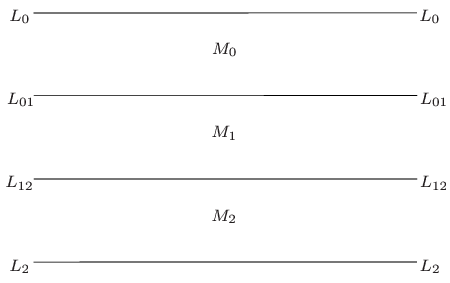}}                
\\
\subfloat{Fig 5}{\label{fig:5}\includegraphics{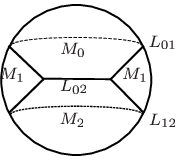}}
\hspace{1cm}
 \subfloat{Fig 6}{\label{fig:6}\includegraphics{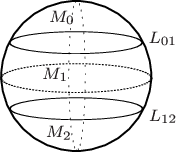}} 
\\
\vspace{0.5cm}
 \subfloat{Fig 7}{\label{fig:7}\includegraphics{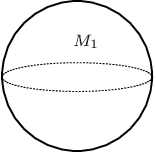}}                
 \subfloat{Fig 8}{\label{fig:8}\includegraphics{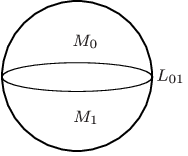}}
 \subfloat{Fig 9}{\label{fig:9}\includegraphics{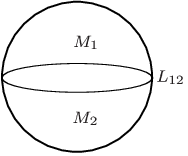}}
\caption{Index calculation using excision}
 \label{excision}
\end{figure}

We are ready to compute the index of the linearization at a constant map.  Note
that for this linearization all maps are into $\f{C}^n$ with the standard complex
structure and the nonlinear Lagrangian boundary conditions are replaced by their
tangent spaces in $\f{C}^n$.    Consider the nine figures drawn in Figure
\ref{excision}.  Let $m_i$ stand for the index of Fig $i$.  We wish to compute $m_1$.
We have shown in the previous section that the kernel of the map represented by Fig 1 is
zero.  Similarly the kernel of Fig 2 is zero.  This implies that $m_1\leq 0$ and
$m_2\leq 0$.  By additivity of index,  $$m_3=m_1+m_2$$  Excising Fig 3 and 6 along a neighborhood of the dotted
circles (where we use the vertical dotted circle for Fig 6) gives $$m_3+m_6=m_4+m_5$$  Now, we claim that $m_5=\text{dim}(L_{02})$.  To see
this, one simply folds to obtain a single strip with Morse-Bott Lagrangian boundary
conditions on $(L_{01}\times L_{12} , L_{02} \times \Delta)$.  Thus, the discussion
right after Lemma \ref{lemmaMB} above
gives $m_5=\text{dim}(L_{02})$.  We have $m_4=0$ since it is the quilt of the identity map.  To compute $m_6$, note
that excision (this time we use horizontal dotted circle for Fig 6) implies that $$m_6+m_7=m_8+m_9$$  By folding, we have that $m_8$ and
$m_9$ represent disks so  $$m_8+m_9=\text{dim}(L_{01})+\text{dim}(L_{12})$$ and $m_7= \text{dim}(M_1)$ since it is the linearization of a constant map of a sphere. Thus, $m_6=\text{dim}(L_{02})$ which
together with $m_4=0$ and $m_5= \text{dim}(L_{02})$ gives $m_3=0$. This implies $m_1=m_2=0$, as desired.

\subsection{Completion of the proof of Theorem \ref{isomorphism}} 

By Proposition \ref{transverse}, the moduli spaces $\mathcal{M}_J(\underbar{x},\underbar{y})$ are transversely cut out. To define a count we need to show that the zero dimensional moduli spaces
$\mathcal{M}^0_J(\underbar{x},\underbar{y})$ is compact and hence finite. Then,
$\mathcal{M}^0_J(\underbar{x},\underbar{y})$ allows us to define the map $$\Phi:
CF(L_0,L_{01},L_{12},L_2)\rightarrow CF(L_0,L_{02},L_2)$$ We will sometimes refer to this map by Y-map. To verify that this is
indeed a chain map we need to consider the 1-dimensional moduli spaces
$\mathcal{M}^1_J(\underbar{x},\underbar{y})$. 

First note that the set of assumption (1) on second homotopy classes ensure that we
cannot have any interior disk or sphere bubbles. Therefore, by Gromov compactness the
boundary of $\mathcal{M}^0_J(\underbar{x},\underbar{y})$ and
$\mathcal{M}^1_J(\underbar{x},\underbar{y})$ consists of broken configurations at the
ends. In the case of $\mathcal{M}^0_J(\underbar{x},\underbar{y})$, there cannot be
breaking at the $\underbar{x}$ and $\underbar{y}$ ends because by our transversality
assumptions such a break cannot occur in a 0-dimensional component of the moduli space.  Finally, we need to
argue that for both $\mathcal{M}^0_J(\underbar{x},\underbar{y})$ and
$\mathcal{M}^1_J(\underbar{x},\underbar{y})$ there cannot be a breaking at the Y-end. 

To this end, the following lemma will be useful:

\begin{lemma}
	\label{disk}
	Let $\delta : \f{R} \times [0,1] \to (\underbar{M} ; L_{01} \times L_{12} , L_{02} \times \Delta ) $ be a smooth map with Lagrangian boundary conditions and at the two ends converges exponentially to points in the Morse-Bott intersection. Then there exists a smooth map $\tilde{\delta} \in \pi_2(\underbar{M}; L_{01} \times L_{12})$ such that \[ \int \delta^*\omega_M = \int \tilde{\delta}^*\omega_M \]
	Furthermore, the Fredholm indices of $\delta$ and $\tilde{\delta}$ are related as follows: \[ \text{index}(\tilde{\delta}) + 2\text{dim}(M_1) = \text{index}(\delta) \]
\end{lemma} 

\begin{proof} It will be convenient to view $\delta$ as a quilted map $$\delta_i:\Ar \times [0,1]\rightarrow M_i, i=1,2,3$$ with cyclic Lagrangian boundary conditions $(L_{01},L_{12},L_{02})$.  Thus, we have $(\delta_2(1,s),\delta_0(0,s))\in L_{02}$, etc. Let $\delta_4:\Ar\times
[0,1]\rightarrow M_1$ be the map with $\delta_4(s,t)=b(s)$, where $b(s)$ is the unique
point on $M_1$ with $(\delta_2(1,s),b(s),b(s),\delta_0(0,s))\subset L_{01}\times
L_{12}$. Note that $\delta_4$ is a smooth map which is not holomorphic but converges
exponentially as $|s|\rightarrow \infty$.  Furthermore, the image of $\delta_4$ is
just a path, thus $\delta_4$ has zero area.  We have now obtained a new quilt
$\delta'$ with four patches $\delta_i$ and seams $(L_{01},L_{12},L_{01},L_{12})$ while the area of $\delta$ and $\delta'$ is the same.  We fold $\delta'$ to obtain a map  $$\Ar\times
[0,1] \rightarrow \underbar{M}$$ with boundary on $(L_{01}\times L_{12},L_{01}\times L_{12})$.  Alternatively, we may view this as a map $$\tilde{\delta}:D\rightarrow
\underbar{M}$$ where $D$ is the unit disk and $\tilde{\delta}$ has Lagrangian boundary conditions on $L_{01}\times L_{12}$. This map satisfies the required property, since $\delta,\delta', \tilde{\delta}$ all have the same area.

To see the relation of Fredholm indices, we note that $\delta, \delta', \tilde{\delta}$ all have the same Maslov index since $t$-derivative of $\delta_4$ vanishes (cf. \cite{WWc} pg. 846). The Fredholm index is given by the sum of Maslov index and the dimension of the Morse-Bott intersection. To conclude, observe that the dimension of the Morse-Bott intersection for $\delta$ is $\text{dim}((L_{01} \times L_{12}) \cap (L_{02} \times \Delta)) = \text{dim}(M_0) + \text{dim}(M_2)$ while it is $\text{dim}(L_{01} \times L_{12}) = \text{dim}(M_0) + 2 \text{dim} (M_1) + \text{dim}(M_2)$ for $\tilde{\delta}$. \end{proof} 

Back to the proof of Theorem \ref{isomorphism}, observe that a bubble at the Y-end would be a holomorphic map $\delta:
\f{R} \times [0,1] \to M_0^- \times M_1 \times M^-_1 \times M_2$ as in the previous lemma. However, holomorphicity ensures that it has positive area. Therefore, we would obtain an element $\tilde{\delta} \in \pi_2(\underbar{M}; L_{01} \times L_{12})$ which has positive area, which is impossible by the assumption (1). Thus, there cannot be a bubbling at the Y-end.

Therefore, standard gluing theory applied to
$\mathcal{M}^1_J(\underbar{x},\underbar{y})$ shows that $\Phi$ is a chain map. 

\begin{remark} Note
that we do not need to consider Morse-Bott gluing as the only place where a breaking could occur is at the
ends where we have transverse intersection.   
\end{remark}

To complete the proof we need to show that $\Phi$ induces an isomorphism on cohomology. Let us write $\mathcal{P}^{in} = \{ (\gamma_{0}, \gamma_{1}, \gamma_{2}) |\ \gamma_i :
[0,1] \to M_i, \gamma_{0}(0) \in L_0, \ (\gamma_{0}(1), \gamma_{1}(0)) \in L_{01}, \
(\gamma_{1}(1), \gamma_{2}(0)) \in L_{12}, \gamma_{2}(1) \in L_2   \} $. Each generator of the chain complex $CF(L_0,L_{01},L_{12},L_2)$ is an element of $\mathcal{P}^{in}$ and the chain complex splits into a direct sum of chain complexes corresponding to the path components of $\mathcal{P}^{in}$. In what follows, we assume that $\mathcal{P}^{in}$ is path-connected in order to avoid the notational complexity of indexing path components and carrying out the argument for each path-component separately. 

As above, assumption (1) enable us to have a well-defined action functional, 
\begin{eqnarray*} 
\mathcal{A}^{in} : \mathcal{P}^{in} \to \f{R}  \\ 
\gamma \to \sum_{i=0}^2 \int_{[0,1]^2} (\gamma_i^t)^* \omega_i
\end{eqnarray*}

where as before $\gamma_i^t$ is any choice of a smooth homotopy in $\mathcal{P}^{in}$
between $\gamma_i$ and a fixed path on $\mathcal{P}^{in}$, and $\omega_i$ are the given symplectic
forms on $M_i$. Therefore, the chain complex
$CF(L_0,L_{01}, L_{12},L_2)$ inherits a filtration given by $\mathcal{A}^{in}$. Recall that the
Floer differential decreases the action functional.

Next, we have a similar filtration on $CF(L_0,L_{02},L_2)$, where we write
$$\mathcal{P}^{out} = \{ (\gamma_{0}, \gamma_{2}) |\ \gamma_i : [0,1] \to M_i,
\gamma_{0}(0) \in L_0, \ (\gamma_{0}(1), \gamma_{2}(0)) \in L_{02}, \gamma_{2}(1) \in
L_2   \} $$ and $\mathcal{A}^{out}$ is defined as before. Note that $\mathcal{P}^{out}$
consists of elements of $\mathcal{P}^{in}$ such that $\gamma_1$ is constant.
Therefore, the action functional $\mathcal{A}^{out} = \mathcal{A}^{in}$ whenever both are defined. 
 
In view of the fact that constant maps are the only zero dimensional solutions which preserve the action (and they are transversely cut out as proved in Proposition \ref{transverse}), to conclude that $\Phi$ is
an isomorphism, it suffices to show that $\Phi$ is a filtered chain map. For this, it
suffices to show that if $\mathcal{M}^0_J(\underbar{x}, \underbar{y})$ is non-empty,
then the following inequality holds : $$\mathcal{A}^{in}(\underbar{x}) \geq
\mathcal{A}^{out}(\underbar{y}) $$
where the equality holds only if $\underbar{x} = \underbar{y}$ and $\mathcal{M}^0_J(\underbar{x}, \underbar{y})$ consists entirely of the trivial solution. 

To see this, let $u$ be a holomorphic curve in
$\mathcal{M}^0_J(\underbar{x},\underbar{y})$. Now, $u$ can be considered as a path
$(\gamma^t_i)_{i=0}^2$ in the path space $\mathcal{P}^{in}$ such that $\gamma^t_1$
shrinks to a constant path as we get to the Y-end and stays constant until the
outgoing end. Now, since $u$ is a holomorphic map, the action strictly decreases unless $u$ is constant.
This gives the desired inequality: $\mathcal{A}^{in}(\underbar{x})
\geq
\mathcal{A}^{in}(\underbar{y}) = \mathcal{A}^{out}(\underbar{y})$ with equality only if $\underbar{x} =\underbar{y}$ and $u$ is constant. So, we have $ \# \mathcal{M}^0_J(\underbar{x},\underbar{x})=1$ with contributions coming only from constant solutions, which are cut transversely. We conclude that $\Phi$ is an isomorphims, as desired.

\QED

\section{Extensions of the main theorem}

In this section, we discuss the proof of Theorem \ref{isomorphism} under positive and
strongly negative monotonicity assumptions. This result in the positively monotone case was first proved  by Wehrheim and Woodward by different techniques.  However, the strongly negative
monotone case is new and important for our application.  

As a first step, we prove a topological lemma which will allow us to establish an a priori energy bound for pseudoholomorphic curves counted in the moduli space
$\mathcal{M}_J (\underbar{x},\underbar{y})$ used for defining the map $\Phi: CF(L_0,
L_{01} , L_{12} , L_2) \to CF(L_0, L_{02}, L_2)$.

\begin{lemma} \label{homotopy} Let $\underbar{x}, \underbar{y} \in CF(L_0, L_{01}, L_{12}, L_2) \simeq
CF(L_0, L_{02}, L_2)$ be two generalized intersection points. Let
$\mathcal{P}(\underbar{x}, \underbar{y})$ be the space of maps $(-\infty,\infty) \to
\mathcal{P}(L_0,L_{01},L_{12},L_2)$ which asymptotically converge to $\underbar{x}$ and $\underbar{y}$.
Similarly, let $\mathcal{B}(\underbar{x},\underbar{y})$ be the space of smooth maps
that is considered for defining the map $\Phi: CF(L_0, L_{01} , L_{12} , L_2) \to
CF(L_0, L_{02}, L_2)$ (see Section 2.2)
Then there is a natural inclusion map $\mathcal{B}(\underbar{x},\underbar{y}) \hookrightarrow
\mathcal{P}(\underbar{x},\underbar{y})$ which induces an isomorphism:
 \[ \pi_0(\mathcal{P}(\underbar{x},\underbar{y}))\cong
\pi_0(\mathcal{B}(\underbar{x}, \underbar{y})) \]

In particular, any homotopy classes of maps used to define $\Phi: CF(L_0, L_{01} , L_{12} ,
L_2) \to CF(L_0, L_{02}, L_2)$ mapping $\underbar{x}$ to $\underbar{y}$ can be
represented as a concatenation of maps $\Phi = u \# c$, where $u \in
\mathcal{P}(\underbar{x},\underbar{y})$ and $c:CF(L_0, L_{01} , L_{12} , L_2)
\to CF(L_0, L_{02}, L_2)$ is the constant map with value $\underbar{y}$.
\end{lemma}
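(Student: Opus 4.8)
The plan is to compare the two path spaces via the folding identification used throughout the construction of $\Phi$. First I would spell out what the two spaces parametrize. An element of $\mathcal{P}(\underbar{x},\underbar{y})$ is a map $\Ar \to \mathcal{P}(L_0,L_{01},L_{12},L_2)$, i.e.\ a quadruple of maps $(\gamma_0,\gamma_1,\gamma_2)$ on $\Ar \times [0,1]$ (with $\gamma_1$ split along the diagonal into two pieces, as in the construction of $\Sigma$) satisfying the four seam conditions and converging to $\underbar{x}$ at $-\infty$ and $\underbar{y}$ at $+\infty$, with no holomorphicity imposed. An element of $\mathcal{B}(\underbar{x},\underbar{y})$ is a smooth map from the quilt $\Sigma$ of Figure \ref{quilt} with the same seam conditions, incoming end converging to $\underbar{x}$ and outgoing end to $\underbar{y}$, again with no holomorphicity. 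The inclusion $\mathcal{B} \hookrightarrow \mathcal{P}$ is built by observing that $\Sigma$ is obtained from the trivial quilted strip $\Ar \times [0,1]$ by collapsing the $M_1$-patch past the $Y$-end: a map on $\Sigma$ restricts/extends to a map on $\Ar \times [0,1]$ where the $\gamma_1$-component is declared constant (equal to $v(\infty) \in M_1$, which is well-defined by the exponential decay Lemma) for all $s$ beyond the $Y$-end, so this is a genuine continuous inclusion of spaces.

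Next I would prove that this inclusion is a $\pi_0$-isomorphism by exhibiting a deformation retraction of $\mathcal{P}(\underbar{x},\underbar{y})$ onto (the image of) $\mathcal{B}(\underbar{x},\underbar{y})$, or at least a homotopy inverse on components. The key geometric input is that the $Y$-end is a \emph{contractible} modification: collapsing the $M_1$-patch amounts to composing the $M_1^- \times M_1$ factor of the target with the diagonal $\Delta$, and the inclusion $\Delta \hookrightarrow M_1^- \times M_1$ together with the identification $\tilde L_{02} \cong L_{02}$ (clean intersection) shows that forgetting the $\gamma_1$-data beyond the $Y$-end loses no topology. Concretely: given an arbitrary path of paths in $\mathcal{P}$, one first homotopes it so that near the $Y$-seam location the $\gamma_1$-component becomes constant in $s$ — this is possible because the relevant seam Lagrangians $L_{01}\times L_{12}$ and $L_{02}\times \Delta$ intersect cleanly along $\tilde L_{02}$, so the space of admissible local configurations near the collapse is homotopy equivalent to $\tilde L_{02}$ — and then one is exactly in the image of $\mathcal{B}$. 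Surjectivity on $\pi_0$ follows, and injectivity follows by running the same argument one dimension up (on paths of elements, i.e.\ on homotopies).

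Finally I would deduce the "in particular" clause. Since $\Phi$ as a matrix element sending $\underbar{x}$ to $\underbar{y}$ is, up to homotopy, nothing more than a point of $\pi_0(\mathcal{B}(\underbar{x},\underbar{y}))$ (the bundle/chart-theoretic data underlying the moduli space $\mathcal{M}_J$), the isomorphism $\pi_0(\mathcal{P}) \simeq \pi_0(\mathcal{B})$ lets us replace any such class by an element of $\mathcal{P}(\underbar{x},\underbar{y})$ — i.e.\ an honest homotopy of paths from $\underbar{x}$ to $\underbar{y}$ in $\mathcal{P}(L_0,L_{01},L_{12},L_2)$ — concatenated with the constant quilt at $\underbar{y}$; the constant quilt contributes the $c$ term, which carries the "collapsing" part of the topology and has zero symplectic area, so that the energy of $\Phi$ is computed entirely from the $\mathcal{P}$-part $u$. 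The main obstacle I anticipate is making the collapse-near-the-$Y$-end homotopy rigorous: one must check that clean intersection of $L_{01}\times L_{12}$ with $L_{02}\times \Delta$ along $\tilde L_{02}$ really does make the space of local configurations near the collapsed $M_1$-patch homotopy equivalent to $\tilde L_{02}$ uniformly, so that the retraction can be performed continuously in families; the exponential decay Lemma and the clean-intersection hypothesis are precisely what is needed here, but the family version takes some care.
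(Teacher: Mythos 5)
Your overall structure (spell out the identifications, prove a $\pi_0$-bijection by constructing an inverse, deduce the $u\#c$ decomposition) matches the paper, but the middle step --- where you try to show the inclusion is a $\pi_0$-isomorphism --- departs from the paper's argument and has a gap. You propose a local deformation near the $Y$-seam, asserting that clean intersection of $L_{01}\times L_{12}$ with $L_{02}\times\Delta$ along $\tilde L_{02}$ lets you homotope the $M_1$-component to be constant there and so land in the image of $\mathcal{B}(\underbar{x},\underbar{y})$. This does not work as stated: for an arbitrary element of $\mathcal{P}(\underbar{x},\underbar{y})$, the slice at $s\approx 1$ is an arbitrary element of $\mathcal{P}(L_0,L_{01},L_{12},L_2)$, and there is no reason for it to be anywhere near the clean intersection locus $\tilde L_{02}$, so clean intersection gives no leverage to flatten the $M_1$-component at a fixed finite $s$. (You also ask for it to be ``constant in $s$,'' whereas the defining condition of the subspace $\mathcal{B}\subset\mathcal{P}$ is that the $M_1$-component be constant in $t$ for $s\ge 1$.)

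The paper instead uses the asymptotic boundary condition, not clean intersection. Because $\gamma^s\to\underbar{y}$ as $s\to\infty$ and $\underbar{y}$ is a generalized intersection point (hence a triple of constant paths), any $\gamma\in\mathcal{P}(\underbar{x},\underbar{y})$ is homotopic to one that is identically the constant $\underbar{y}$ for $s>N$; one then rescales $s\mapsto rs$, $r\in[1,2N]$, to push the constant region back to $s\ge 1/2$, where the $M_1$-component is trivially constant in $t$ and the path therefore lies in $\mathcal{B}$. The same rescaling immediately exhibits the result as the concatenation of a path $u$ supported in $s\le 1/2$ with the constant map $c$ at $\underbar{y}$, giving the ``in particular'' clause directly. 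You should replace the clean-intersection retraction with this convergence-and-rescaling argument; clean intersection and exponential decay are what make the inclusion $\mathcal{B}\hookrightarrow\mathcal{P}$ well defined in the first place, but they are not what makes it a $\pi_0$-bijection.
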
 \begin{proof} Recall that the space of paths in the absence of
	Hamiltonian perturbations (which we assume, by an a priori arrangement of
	transversality of $L_0, L_{01}, L_{12}, L_2$ as before) is given by
	$\mathcal{P}(L_0, L_{01},L_{12},L_2) = \{ (\gamma_{1}, \gamma_2,
	\gamma_{3}) | \gamma_i : [0,1] \to M_i ,  \gamma_1(0) \in L_0,
	(\gamma_1(1),\gamma_2(0)) \in L_{01}, (\gamma_2(1),\gamma_3(0)) \in
	L_{12}, \gamma_3(1) \in L_2) \}$. We will denote a path
	$\gamma:(-\infty,\infty) \to \mathcal{P}(L_0, L_{01}, L_{12}, L_2)$ in
	this path space by $ \gamma^s=(\gamma^s_1, \gamma^s_2, \gamma^s_3) \in
	\mathcal{P}(\underbar{x},\underbar{y})$, where $s \in(-\infty,
	\infty)$. Now the space $\mathcal{B}(\underbar{x},\underbar{y})$ can be
	identified as a subspace of $\mathcal{P}(\underbar{x},\underbar{y})$
	where $\gamma^s=(\gamma^s_1,\gamma^s_2, \gamma^s_3) \in
	\mathcal{B}(\underbar{x},\underbar{y})$ if and only if $\gamma^s_2$ is
	a constant with respect to $t$ for $s\geq 1$. More precisely, first
	note that any map in $\mathcal{B}(\underbar{x},\underbar{y})$ can
	be homotoped to be constant around a neighborhood of the Y-end (because
	of the exponential convergence at the Y-end). Now, for a moment, let us
	forget about all the decorations and seam conditions on the domain of
	$\Phi$ as given in Figure \ref{quilt}, we then see a rectangle with a
	middle point (Y-end) removed. Let's identify this rectangle with $\f{R} \times [0,1]$ ($\f{R}$ corresponds to the horizontal direction and $[0,1]$ corresponds to the vertical direction in Figure \ref{quilt}). We arrange so that the Y-end point corresponds to
	$(1, 1/2)$. Let's foliate this rectangle by vertical lines $L_s= \{s\}
	\times [0,1]$. Now, if we look at $\Phi(L_s)$ for $s<1$ then we get $3$
	paths $(\gamma^s_1, \gamma^s_2,\gamma^s_3)$ as restrictions of
	$\Phi|_{L_s}$. For $s>1$, we obtain $(\gamma^s_1, \gamma^s_3)$ as
	restrictions of $\Phi|_{L_s}$ such that $(\gamma^s_1 (1) ,
	\gamma^s_3(0)) \in L_{02}$. We can view this alternatively, as a triple
	of paths $(\gamma^s_1, \gamma^s_2, \gamma^s_3)$ where $\gamma^s_2$ is
	the constant path such that $(\gamma^s_1(1), \gamma^s_2(t)) \in L_{01}$
	and $(\gamma^s_2(t), \gamma^s_3 (0)) \in L_{12}$ (such a path is
	uniquely determined by the composability of $L_{01}$ and $L_{12}$).
	Finally, since we arranged that $\Phi$ is constant near the $Y$-end,
	the triple $\gamma^s = (\gamma^s_1, \gamma^s_2, \gamma^s_3)$ of path of paths
	extends continuously over $s=1$ and we obtain $\gamma^s \in \mathcal{P}(\underline{x},\underline{y})$. Note that $\gamma^s_2$ does not vary with $t$ for $s\geq 1$. 

The desired equivalence of path components can be seen by noting that
any path $ \gamma^s=(\gamma^s_1, \gamma^s_2, \gamma^s_3) \in
\mathcal{P}(\underbar{x},\underbar{y})$ is homotopic to a path which is constant for $s > N$ for some
sufficiently large $N$ by the requirement of convergence as $s \to \infty$. One can
then isotope $\gamma^s$ so that it is constant for $s\geq \frac{1}{2}$ in both $s$ and
$t$. Thus, we have an inverse
map $\pi_0(\mathcal{P}(\underbar{x},\underbar{y})) \to \pi_0(\mathcal{B}(\underbar{x},
\underbar{y}))$ to the map induced by the inclusion map. This
gives the desired isomorphism. 

To see the last part of the statement more explicitly, express any the homotopy class $\rho \in \mathcal{P}(\underbar{x},\underbar{y})$  as
$(\gamma^s_1, \gamma^s_2, \gamma^s_3)$ as above with $\gamma^s_i$ constant for $s>N$.
Now, consider the homotopy $\rho^r$ where $r \in [0,1]$  given by
$(\gamma^{s+rN}_1, \gamma^{s+rN}_2, \gamma^{s+rN}_3)$. Then $\rho^{1}$ is a map in
$\mathcal{B}(\underbar{x},\underbar{y})$ which is constant for $s\geq \frac{1}{2}$, hence it is
a concatenation of $u \in \mathcal{P}(\underbar{x},\underbar{y})$ and the constant map with value $\underbar{y}$ as stated. \end{proof}

We are now ready to prove the extension of Theorem \ref{isomorphism} to the monotone
case:

\subsection{Proof of Theorem \ref{positive}}

We briefly recall from \cite{WW} why the Floer cohomology groups in
consideration are well-defined (independent of the choices, invariant under
Hamiltonian deformations, etc.). Given $\underbar{x}, \underbar{y} \in L_{(0)}
\cap L_{(1)}$, the monotonicity assumptions guarantee that the energy of index
$k$ holomorphic strips $u\in \mathcal{M}^k(\underbar{x}, \underbar{y})$ is
constant. Therefore, by Gromov-Floer compactness it suffices to exclude disk
and sphere bubbles. The monotonicity assumptions ensure that any non-trivial
holomorphic disk must have non-zero Maslov index which excludes disk bubbles
in 0-dimensional moduli space. If one assumes that the Lagrangians are
orientable, the Maslov index at a disk bubble has to be at least 2, which excludes disk bubbles
in 0- and 1-dimensional moduli spaces. However, to have a well-defined Floer
cohomology group we also need to avoid disk bubbles in index 2 moduli spaces,
hence we require the minimal Maslov index for disks to be at least 3 (the
sphere bubbles are handled similarly). 

Now, as before we will consider the map \[\Phi : HF(L_0, L_{01}, L_{12}, L_2) \to
HF(L_0, L_{02}, L_2) \]

which is defined by counting solutions in $\mathcal{M}^0_J(\underbar{x},
\underbar{y})$. Let us first study the compactness property of the moduli space
$\mathcal{M}_J(\underbar{x},\underbar{y})$ under our assumptions. We first need to
establish an area-index relation to have an a priori energy bound so that we can
apply Gromov-Floer compactness. This follows easily from Lemma \ref{homotopy}. Namely,
to compute the index of an element $\Phi \in
\mathcal{M}_J(\underbar{x},\underbar{y})$, we can topologically apply a homotopy as in
Lemma \ref{homotopy} so that $\Phi = u \# c$ where $u$ is a map contributing to the
differential of the chain complex  $CF(L_0, L_{01}, L_{12}, L_2)$ and $c \in
\mathcal{M}_J(\underbar{y},\underbar{y})$ is the \emph{constant} map at
$\underbar{y}$. In Section \ref{Mindex}, we computed the index of $c$ to be equal to
zero. (Indeed, this computation is the non-trivial part of the argument that we are
giving here). Therefore, by excision,  $$\I (\Phi) = \I (u) + \I(c)= \I(u)$$
Now, by the area-index relation for the moduli space that $u$ belongs to (this
follows from the monotonicity assumptions, see \cite[Remark 5.2.3]{WW}), the energy of index $k$
holomorphic strips is constant. Since the energy of $\Phi$ is equal to the energy $u$, it
follows that the energy of index $k$ maps in
$\mathcal{M}_J(\underbar{x},\underbar{y})$ is constant. This last statement is the main output of monotonicity assumptions and it is  what we mean by area-index relation for holomorphic curves in $\mathcal{M}_J(\underbar{x},\underbar{y})$.

In view of the area-index relation for maps in $\mathcal{M}_J(\underbar{x},\underbar{y})$, we have energy bounds on all
trajectories of index $0$ and $1$ and thus Gromov-Floer compactification holds.
Therefore, the compactification includes
broken configurations at the ends, possibly also including the Y-end, and disk and sphere
bubbles. However, as before our monotonicity assumptions ensure that disk and sphere
bubbles do not arise in the compactification of index 0 and 1 moduli spaces. Recall
that $\mathcal{M}^0_J$ is used to define the map $\Phi$ and $\mathcal{M}^1_J$ is used
to check that it is a chain map.

We now need to deal with bubbles at the Y-end. Given a sequence of trajectories
$u_i\in \mathcal{M}^1_J (\underbar{x},\underbar{y}) $ breaking along the Y-end, by Gromov-Floer compactness,
we get in the limit a pair $(u_\infty,\delta)$ where $u_\infty \in \mathcal{M}_J (\underbar{x},\underbar{y}) $ (possibly broken at the incoming and outgoing ends) and $\delta$ is a holomorphic strip with Morse-Bott boundary
conditions along $(L_{01}\times L_{12},L_{02}\times \Delta)$.  Note that $\delta$ can also be broken but that does not affect the argument. Now, if $\delta$ is non-constant, it will have non-zero energy, therefore we have $$E(u_\infty) <
E(u_i)$$ 
By an application of Lemma \ref{disk}, from $\delta$ we obtain a map $\tilde{\delta}$ with $E(\delta) = E(\tilde{\delta})$ which represents a class in $\pi_2(\underbar{M}, L_{01} \times L_{12})$ therefore if $\delta$ is non-constant, by energy-index relation $\tilde{\delta}$ has at least index $2$ since $L_{01}
\times L_{12}$ are assumed to be orientable. By the index computation in Lemma $\ref{disk}$ we conclude that $\delta$ has index at least $2$ if it is non-constant. Again by the energy-index relation
proved above for maps in $\mathcal{M}_J(\underbar{x},\underbar{y})$, this implies $$ \I(u_\infty)
\leq \I(u_i) -2 = -1$$ Since the index is additive, there exists at least one
unbroken holomorphic piece in $u_\infty$ with negative index. However, since
these moduli spaces are cut out transversely, this cannot occur. Therefore,
$\mathcal{M}^0_J$ and $\mathcal{M}^1_J$ cannot have any broken configuration
with a bubble at the Y-end.  This concludes the argument that the map $\Phi$
is well-defined.  

Now, to check that $\Phi$ is an isomorphism, we construct an approximate inverse to
$\Phi$.  Let $$\Psi:HF(L_0,L_{02},L_2) \rightarrow HF(L_0,L_{01},L_{12},L_2)$$ be the
map given by counting index 0 holomorphic maps from the quilt that is obtained by reversing the orientation of the quilt that is used to define
$\Phi$.  Arguments identical to those for $\Phi$, show that $\Psi$ is a chain map. We claim that
$$\Psi\circ \Phi=I+K$$ where $I$ is the identity and $K$ is a nilpotent map.  This will
prove that $\Phi$ is an isomorphism.  The diagonal entries of $\Psi\circ \Phi$ are
obtained by counting pairs of broken trajectories $(u_1,u_2)$ with
$u_1$ starting at a critical point $x$ and $u_2$ ending at the same critical
point.  In addition, $u_1$ has the same endpoint as the starting point of
$u_2$.  By the area-index relation, the only such trajectories of index $0$ are
the constants.  More generally, given a sequence of such broken pairs
$(u_1,u_2)$, $(u_3,u_4)\cdots(u_{N-1},u_{N})$ such that
the endpoint of $u_i$ is the starting point of $u_{i+1}$ and the starting
point of $u_1$ is the same as the endpoint of $u_N$ we have that the only index
0 trajectory is the constant one. Indeed, since index and area are additive, any such trajectory with
nonzero area has positive index. \\\\  
Let $N_0$ denote the number of intersection points, i.e. cardinality of $\mathcal{I}( \underbar{L})$.
Consider a broken trajectory, that is a sequence of holomorphic curves that
contribute to $\Psi\circ \Phi$, of index zero with no constant pairs.  In other words, these are the holomorphic curves that contribute to $K= \Psi \circ \Phi - I$.
Any such trajectory with more than $N_0-1$ pairs must have a repeated
intersection point.  Thus such trajectories do not arise, since they would include a segment which has positive index, as we
just explained.  Now, if $K^k(x)$ is nonzero there must be a broken trajectory consisting of $k$ pairs connecting $x$ to some critical point $y$. This trajectory consists of
non-constant pairs.  This is easily seen by induction.  First, $K(x)$ lies in the span
of critical points connected to $x$ by a non-constant pair.  Suppose that $K^i(x)$
lies in the span of critical points $y_i$ connected to $x$ by a non-constant broken
path of pairs of length $i-1$.  Any nonzero matrix element $\langle y, K(y_i)\rangle $
gives rise to a critical point $y$ connected to $y_i$ by a non-constant pair.  It is
thus connected to $x$ by a non-constant path of pairs of length $i$ as desired.
Therefore, we may conclude that $K^{N_0}=0$ since it is contained in the span of
elements coming from broken pairs of length $N_0$.  This completes the proof that $\Phi$ is an isomorphism. \QED

\subsection{Proof of Theorem \ref{negative}} 

In this case, we follow the same steps as in the positively monotone case. The only
difference is the way we handle various exclusions of bubbles. Namely, we exclude
bubbling by first arranging the transversality for the moduli spaces of \emph{simple}
sphere bubbles and $\emph{simple}$ disk bubbles. (Recall that simple means not multiply-covered). The strongly negative monotonicity
assumptions is the assumption that the expected dimension of these moduli spaces is
negative therefore when transversality holds (which can be arranged by choosing
the almost complex structure $J$ in the target generically), we guarantee that these
moduli spaces are empty. A lemma of McDuff (\cite{MS}, Proposition 2.51) and the
decomposition lemma of Kwon-Oh \cite{kwonOh} and Lazzarini (\cite{laz}) allows us to
lift this to non-simple sphere and disk
bubbles.  More specifically, the lemma of McDuff states that any pseudoholomorphic
sphere factors through a simple pseudoholomorphic sphere, so the existence of the
former one implies the existence of the latter. Similarly, Kwon-Oh and Lazzarini's lemma implies
that the existence of any pseudoholomorphic disk ensures the existence of a simple
pseudoholomorphic disk. Now, recall that the expected dimension of unparameterized
moduli space of spheres in $M_i$ in the homology class $[u]$ is $2(\langle [u],
c_1(TM_i) \rangle + m_i -3)$. As part of the hypothesis, we assumed that this number is negative, in fact we
assumed that this number is strictly less than $-2$ to exclude bubbling in
$\mathcal{M}^k_J (\underbar{x}, \underbar{y})$ , for $k=0,1,2$. This is required to
ensure that the Floer cohomology groups that we are considering are well-defined and independent of the
auxiliary choices. Similarly, to avoid disk bubbles, recall that by the real-analyticity of the seams, any disk bubble in a quilted map can
be seen as a disk bubble in $M_i \times M_{i+1}$ with boundary on $L_{i+1}$ for some
$i$. The expected dimension for unparameterized simple disks in the homology class $u$
is given by $\mu_{L_{i+1}}([u]) +(m_i+m_{i+1}) -3$. We assumed that this number is
strictly less than $-2$ to avoid disk bubbles in $\mathcal{M}^k_J (\underbar{x},
\underbar{y})$, for $k=0,1,2$ for the same reason as before. 

Therefore, these considerations imply that the Floer cohomology groups are
well-defined.  Furthermore, the negative monotonicity assumption gives an area-index
relation as before, which guarantees an a priori energy bound on the moduli space
$\mathcal{M}^k_J (\underbar{x}, \underbar{y})$, hence Gromov-Floer compactness
applies.  Since we excluded the possibility of the sphere and disk bubbled
configurations in the compactification of the moduli spaces $\mathcal{M}^0_J$ and
$\mathcal{M}^1_J$, to finish off the only remaining issue is to exclude the bubbling
at the Y-end. We will follow the notation given in the proof of Theorem
\ref{positive}.  We need to exclude non-constant $\delta$ bubbles. Recall that
$$\delta : \f{R} \times [0,1] \to (\underbar{M} ; L_{01} \times L_{12} , L_{02} \times
\Delta ) $$ is a strip with Lagrangian boundary conditions and at the two ends
converges exponentially to points in the Morse-Bott intersection. Note that we can
always ensure the transversality of the moduli space of such $J$-holomorphic curves  by
choosing our $J$ to be $t$-dependent near the Y-end (cf. \cite{FHS}).
\begin{lemma}
$\I(\delta)\leq 0$.
\end{lemma}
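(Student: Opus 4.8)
The plan is to bound the Fredholm index of a $\delta$-bubble from above by showing it contributes energy and hence, via the negative monotonicity relation, has non-positive index; more precisely, the index is bounded by minus twice the number of disk/sphere bubble inputs, but here we want the cleaner statement $\I(\delta)\le 0$. First I would recall that $\delta:\f{R}\times[0,1]\to(\underbar{M};L_{01}\times L_{12},L_{02}\times\Delta)$ has Morse-Bott Lagrangian boundary conditions, converging exponentially at $s\to\pm\infty$ to points $p_{\pm}\in\tilde L_{02}=(L_{01}\times L_{12})\cap(L_{02}\times\Delta)$. The index of such a strip decomposes as a "topological" part — the Maslov-type index of the linearized $\delbar$-operator with exponential weights — plus the dimension contribution $\mathrm{dim}(\tilde L_{02})=\mathrm{dim}(L_{02})$ coming from the finite-dimensional factor $T_{p}\tilde L_{02}$ in the domain, exactly as in the Morse-Bott setup discussed right after Lemma \ref{lemmaMB}.

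Next I would invoke the excision computation of Section \ref{Mindex}: the index of the \emph{constant} $Y$-map was shown there to be zero, and the non-trivial content of that computation was precisely that the Maslov index of the folded strip with boundary on $(L_{01}\times L_{12},L_{02}\times\Delta)$ at a constant solution equals $\mathrm{dim}(L_{02})$ and cancels against the rest of the quilt. Applying the same folding, a $\delta$-bubble is a holomorphic strip in $\underbar{M}$ whose index relative to the constant strip is governed by the area via the area-index (monotonicity) relation: under the hypothesis $\int v^*\omega_{\underbar M}=\tau I_{\mathrm{Maslov}}$ with $\tau<0$, any strip with $E(\delta)\ge 0$ (which holds automatically since $\delta$ is holomorphic, $E(\delta)\ge 0$, with equality iff $\delta$ is constant) satisfies $I_{\mathrm{Maslov}}(\delta)\le 0$. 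Adding back the $\mathrm{dim}(L_{02})$ contribution and subtracting the same quantity from the "constant" normalization gives $\I(\delta)=\mathrm{dim}(L_{02})+I_{\mathrm{Maslov}}(\delta^{rel})\le \mathrm{dim}(L_{02})$; but the right comparison is against the constant $\delta$, which has $\I=\mathrm{dim}(L_{02})$, wait — the clean statement wanted is $\I(\delta)\le 0$, so I must use the grading convention in which the Morse-Bott strip's index is normalized so that the constant strip has index $0$, i.e. $\I(\delta)=I_{\mathrm{Maslov}}^{rel}(\delta)$, and then $E(\delta)\ge 0$ with $\tau<0$ forces $I_{\mathrm{Maslov}}^{rel}(\delta)\le 0$ with equality iff $E(\delta)=0$ iff $\delta$ constant.

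Concretely the argument runs: (i) $E(\delta)=\int\delta^*\omega_{\underbar M}\ge 0$ since $\delta$ is $J$-holomorphic and $\omega_{\underbar M}$ tames $J$, with equality iff $\delta$ is constant; (ii) by the monotonicity hypothesis (4) applied to (capped) annuli/strips on $(L_{(0)},L_{(1)})$, together with the remark in the introduction that the corresponding relation for the right-hand-side Lagrangians and for the folded pair $(L_{01}\times L_{12},L_{02}\times\Delta)$ follows, we get $E(\delta)=\tau\,\I(\delta)$ in the normalization where constants have index zero (using that the constant has zero energy and zero index); (iii) since $\tau<0$ and $E(\delta)\ge 0$, we conclude $\I(\delta)\le 0$. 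The main obstacle I expect is step (ii): one must check carefully that the Morse-Bott index normalization is compatible with the area-index relation, i.e. that capping $\delta$ off by a path in $\tilde L_{02}$ lands it in a setting where hypothesis (4) (or its consequence for the folded seam pair) literally applies, so that the energy of $\delta$ equals $\tau$ times its index with the constant strip as the zero of the grading; this is exactly the place where the index computation of Section \ref{Mindex} (the fact that $m_5=m_6=\mathrm{dim}(L_{02})$) is used to fix the normalization.
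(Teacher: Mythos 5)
Your approach is genuinely different from the paper's, and it has a gap that I don't think can be filled as stated. You try to get the bound from the weak area--index relation (hypothesis (4) of Theorem \ref{negative}) applied to the $\delta$-strips, but hypothesis (4) is about annuli in $\underbar{M}$ with boundary on $(L_{(0)},L_{(1)})$, while $\delta$ is a Morse--Bott strip in the \emph{different} product $M_0^-\times M_1\times M_1^-\times M_2$ with boundary on $(L_{01}\times L_{12}, L_{02}\times \Delta)$. The remark in the introduction only addresses the monotonicity for the right-hand-side complex; it does not establish an area--index relation for the folded seam pair, and there is no general mechanism to transfer annulus-monotonicity to it. You flag this yourself, but it is a real gap, not just a normalization to be checked. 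Moreover, even granting an area--index relation for $\delta$, your normalization is off: in the Morse--Bott convention used in Section \ref{Mindex}, the constant strip has index $\dim(\tilde L_{02}) = \dim L_{02} = m_0+m_2 > 0$, not $0$. The relation $E(\delta) = \tau(\I(\delta) - \dim L_{02})$ together with $E(\delta)>0$, $\tau<0$ would only give $\I(\delta) < m_0+m_2$, which is not $\I(\delta)\le 0$.

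The paper's proof instead uses the \emph{strongly} negative assumption, hypothesis (5), which bounds Maslov indices of disks with boundary on each $L_{i+1}$ by an amount comparable to the dimension. Concretely, it introduces an auxiliary non-holomorphic fourth patch $\delta_4$ with zero area (well-defined because $L_{01}\circ L_{12}$ is embedded), turning $\delta$ into a quilt $\delta'$ whose seams are only $L_{01}$ and $L_{12}$; folding gives a strip $\delta''$ and then a disk $\delta'''$ with boundary on $L_{01}\times L_{12}$. Excision gives $\I(\delta'')=\I(\delta''')$, and counting Morse--Bott dimensions gives $\I(\delta)+2m_1=\I(\delta'')$. Then $\I(\delta''')=\mu_{L_{01}\times L_{12}}([\delta'''])+(m_0+2m_1+m_2)\le 0$ by hypothesis (5), and hence $\I(\delta)\le -2m_1\le 0$. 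The whole point of strengthening (4) to (5) in Theorem \ref{negative} is precisely that weak monotonicity alone does not control these Morse--Bott bubbles; your argument uses only (4) and so cannot recover the bound.
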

\begin{proof}

	Via the construction given in Lemma \ref{disk} we will relate the index of $\delta$ to that of a disk in $\underbar{M}$ with boundary
on $L_{01}\times L_{12}$.  The desired conclusion will then follow from the
monotonicity assumptions of Theorem \ref{negative}.  Recall from Lemma \ref{disk} that we view $\delta$ as a quilt of maps $$\delta_i:\Ar \times [0,1]\rightarrow M_i, i=1,2,3$$
with cyclic Lagrangian boundary conditions $(L_{01},L_{12},L_{02})$.  We introduce $\delta_4:\Ar\times
[0,1]\rightarrow M_1$ to be the map with $\delta_4(s,t)=b(s)$, where $b(s)$ is the unique
point on $M_1$ with $(\delta_2(1,s),b(s),b(s),\delta_0(0,s))\in L_{01}\times
L_{12}$. This defines a quilted map 
$\delta'$ with four patches $\delta_i$ and seams $(L_{01},L_{12},L_{01},L_{12})$. Note
that we have equality of energies $E(\delta)=E(\delta')$ since the image of $\delta_4$ has zero area.  We fold $\delta'$ to obtain a map  $$\delta'':\Ar\times
[0,1] \rightarrow \underbar{M}$$ with boundary on $(L_{01}\times L_{12},L_{01}\times
L_{12})$.  Alternatively, we may view this as a map $$\delta''':D\rightarrow
\underbar{M}$$ where $D$ is the unit disk and $\delta'''$ has Lagrangian boundary
conditions on $L_{01}\times L_{12}$. Now observe that  $\I(\delta''')=\I(\delta'')$ and by the monotonicity assumptions, $$\I(\delta''') =\mu_{L_{01}\times L_{12}}([\delta'''])+m_0+2m_1+m_2\leq 0$$
To conclude, we know by Lemma \ref{disk} that $\I(\delta) + 2\text{dim}(M_1) = \I(\delta''')$. Putting all this together, we get
$$\I(\delta)+2 \text{dim}(M_1) \leq 0$$
 We conclude that $\I(\delta)\leq 0$ as desired. \end{proof} 

 Since $\delta$ is assumed to be non-constant, it cannot have expected dimension zero since translations contribute one dimension to the moduli space. Therefore, $\I(\delta)<0$.  Such $\delta$ cannot occur in view of the transversality of the moduli space.   
Having excluded bubbling at the Y-end, we argue as in the positively monotone case to
conclude that the map $\Phi$ gives the desired isomorphism.  The crucial point is
again to exclude broken non-constant trajectories with the same endpoints.  While in
the positive monotone case these gave rise to moduli spaces of index greater than
zero, under the negative monotone assumptions, the sum of the expected dimension of
such trajectories is negative.  This means that at least one unbroken trajectory in
the sequence has negative index.  This violates the transversality. \QED

\end{document}